\newcommand{\R}  {{\mathbb R}}
\newcommand{\C}  {{\mathbb C}}
\newcommand{\N}  {{\mathbb N}}
\newcommand{\bs} {{\boldsymbol{\sigma}}}
\newcommand{\bt} {{\boldsymbol{\tau}}}
\newcommand{\bo} {{\boldsymbol{\omega}}}
\newcommand{\bb} {{\boldsymbol{\beta}}}
\newcommand{\bx} {{\boldsymbol{x}}}
\newcommand{\by} {{\boldsymbol{y}}}
\newcommand{\bz} {{\boldsymbol{z}}}
\newcommand{\bee}{{\boldsymbol{e}}}
\newcommand{\be} {{\boldsymbol{1}}}
\newcommand{\bc} {{\boldsymbol{c}}}
\newcommand{\bU} {{\boldsymbol{U}}}
\newcommand{\bu} {{\boldsymbol{u}}}
\newcommand{\bv} {{\boldsymbol{v}}}
\newcommand{\bn} {{\boldsymbol{0}}}
\newcommand{\cB} {{\mathcal{B}}}
\newcommand{\X}  {{\mathfrak X}}
\newcommand{\nnorm}{n^{\mathrm{norm}}}
\newcommand{\nabs} {n^{\mathrm{abs}}}
\newcommand{\eps}  {\varepsilon}
\renewcommand{\phi}{\varphi}
\DeclareMathOperator{\cost}{cost}
\DeclareMathOperator{\decay}{decay}
\DeclareMathOperator{\dec}{dec}
\DeclareMathOperator{\Act}{Act}
\numberwithin{equation}{section}
\theoremstyle{plain}
\newtheorem{lemma}{Lemma}[section]
\newtheorem{theo}[lemma]{Theorem}
\newtheorem{cor}[lemma]{Corollary}
\theoremstyle{definition}
\newtheorem{rem}[lemma]{Remark}
\begin{document}

\title[]{Multi- and Infinite-variate Integration and $L^2$-Approximation
on Hilbert Spaces with Gaussian
Kernels}

\author[Gnewuch]
{M.~Gnewuch}
\address{
Institut f\"ur Mathematik\\
Universit\"at Osnabr\"uck\\
Albrechtstra{\ss}e \ 28A\\
49076 Osnabr\"uck\\ 
Germany}
\email{michael.gnewuch@uni-osnabrueck.de}

\author[Ritter]
{K.~Ritter}
\address{Fachbereich Mathematik\\
RPTU Kaisers\-lautern\\
Postfach 3049\\
67653 Kaiserslautern\\
Germany}
\email{klaus.ritter@math.rptu.de}

\author[R\"u\ss mann]
{R.~R\"u\ss mann}
\address{Fachbereich Mathematik\\
RPTU Kaisers\-lautern\\
Postfach 3049\\
67653 Kaiserslautern\\
Germany}
\email{robin.ruessmann@math.rptu.de}

\date{December 05, 2025}

\begin{abstract}
We study integration and $L^2$-approximation in the worst-case
setting for deterministic linear algorithms based on 
function evaluations.
The underlying function space is
a reproducing kernel Hilbert space with a Gaussian kernel
of tensor product form. 
In the infinite-variate case, for both computational problems,
we establish matching upper and lower bounds
for the polynomial convergence rate of the $n$-th minimal error.
In the multivariate case, we improve several tractability results for the integration problem.
For the proofs, we establish the following transference result together with an explicit construction: 
Each of the computational problems on a space with a Gaussian kernel
is equivalent on the level of algorithms to the same 
problem on a Hermite space with suitable parameters.

\end{abstract}
\maketitle

\section{Introduction}
We consider integration and $L^2$-approximation of functions $f \colon
\X \to \R$ of $d$ variables in the cases $d \in \N$ and $d = \infty$,
where the underlying measure $\mu$ is the $d$-fold product of
the univariate standard normal distribution $\mu_0$.
The functions $f$ are assumed to be
elements of a reproducing kernel Hilbert space $H(M)$ with a reproducing 
kernel $M$ of 
tensor product form with domain $\X:=\X(M)$. For $d \in \N$ the 
domain is given by $\X := \R^d$ (and $\mu$ is the
$d$-dimensional standard normal distribution), while 
for $d=\infty$ the domain $\X$ is a proper
subset of the sequence space $\R^\N$ with $\mu(\X)=1$.  
This setting, with $\mu_0$ as before or with
$\mu_0$ being the uniform distribution on $[0,1]$,
is frequently studied in the literature on multivariate
or infinite-variate integration and approximation. 

Throughout this paper, we use the following notation.
We put $J := \{1,\dots,d\}$ if $d \in \N$ and
$J := \N$ if $d=\infty$, and 
$h_\nu$ denotes the univariate Hermite polynomial of 
degree $\nu \in \N_0$, normalized in $L^2(\mu_0)$.

We study two particular kinds of reproducing kernels.
Primarily, we are interested in Gaussian kernels
\[
L_\bs(\bx,\by) := \prod_{j \in J} 
\exp \left( -\sigma_j^2 \cdot (x_j-y_j)^2\right)
\]
with $\bx,\by \in \X(L_\bs)$,
which are specified by sequences $\bs := (\sigma_j)_{j \in J}$ of shape 
parameters $\sigma_j > 0$. 
Additionally, we consider Hermite kernels
\[
K_\bb(\bx,\by) := \prod_{j \in J}
\Biggl(\,
\sum_{\nu=0}^\infty \beta_j^\nu \cdot h_\nu(x_j) \cdot h_\nu(y_j)
\Biggr)
\]
with $\bx,\by \in \X(K_\bb)$, which 
are specified by sequences $\bb := (\beta_j)_{j \in J}$ of base 
parameters $0 < \beta_j < 1$.
See Section~\ref{s2} for the study of
the function spaces $H(L_\bs)$ and $H(K_\bb)$ and, in the case $d=\infty$,
of the corresponding domains $\X(L_\bs)$ and $\X(K_\bb)$.
Here we only mention the following monotonicity properties:
If $\bs \leq \bs^\prime$, then
$\X(L_{\bs^\prime}) \subseteq \X(L_{\bs})$
and the restriction $f \mapsto f|_{\X(L_{\bs^\prime})}$
defines a continuous mapping from
$H(L_\bs)$ to $H(L_{\bs^\prime})$;
analogously, if $\bb \leq \bb^\prime$, then
$\X(K_{\bb^\prime}) \subseteq \X(K_{\bb})$
and the restriction $f \mapsto f|_{\X(K_{\bb^\prime})}$
defines a continuous mapping from
$H(K_\bb)$ to $H(K_{\bb^\prime})$.

For both kernels, $M := L_\bs$ and $M := K_\bb$, and for both problems, 
integration and $L^2$-approximation, we analyze deterministic linear 
algorithms that are based on a fixed number of function
evaluations in the worst-case setting on the unit ball in 
$H(M)$. Concerning the definition of cost we distinguish
two cases: For finite $d$ the cost per function evaluation at any
point $\bx \in \X$ is
one, the most natural choice, so that the worst-case cost of an algorithm is equal
to the number of function evaluations.
For $d = \infty$ it is widely accepted that
the analogous definition of cost is too generous and
non-predictive for computational practice.
Therefore we employ the unrestricted subspace 
sampling model, which has been introduced in
\citet{KuoEtAl10} to define the worst-case cost of algorithms 
for computational problems on spaces of functions of infinitely many 
variables.  Accordingly, function evaluation is only permitted at points 
$\bx := (x_j)_{j \in J} \in \X$ with finitely many non-zero components
$x_j$, and $\cost(\bx)$, the cost for the evaluation at $\bx$, depends
non-decreasingly on the number of non-zero components.
The worst-case setting is presented in detail in Section~\ref{s2a}.

A key quantity in the analysis is
the $n$-th minimal error $e_n(M)$
for integration and for $L^2$-approximation, which is, roughly speaking,
the smallest worst-case error on the unit ball in $H(M)$
that can be achieved by any algorithm with worst-case cost at most 
$n \in \N$.

In this framework integration and $L^2$-approximation on $H(L_\bs)$ 
with $d \in \N$
are studied in
\citet{FHW2012},
\citet{KW2012},
\citet{KSW2017},
\citet{KS19},
and
\citet{KOG21},
see also
\citet{SW2018}.
On $H(K_\bb)$ with $d \in \N$ these problems are studied in
\citet{IKLP15}
and in
\citet{IKPW16b},
see also \citet{IKPW16a}.
Actually, the kernels $K_\bb$, which define spaces $H(K_\bb)$ of
functions of infinite smoothness, are only particular elements of 
the broader class of Hermite kernels, which may also define
spaces of functions of finite smoothness. 
Here we refer to
\citet{IL15}, 
\citet{DILP18},
and \citet{LPE22}
for results on integration and $L^2$-approximation with $d \in \N$.

For $d = \infty$ we are not aware of results on
integration and $L^2$-approximation on spaces
with Gaussian kernels, while spaces with Hermite kernels, including the 
kernels $K_\bb$, have been studied for both problems in
\citet{GneEtAl24}.

Numerous models in probability theory 
are ultimately based on a sequence of independent random variables, 
and in every such model
expectations may be represented as integrals with respect to product 
measures.
This motivates the study of integration of functions $f$ defined 
on the sequence space $\R^\N$ or subspaces thereof.
Let us mention, for instance, the 
L\'evy-Ciesielski (or Brownian bridge) representation of Brownian motion,
which is based on an independent sequence of standard normally
distributed random variables and thus leads to the measure
$\mu$ that is studied in this paper. In this setting, the results
derived in this paper may potentially be used to compute
expectations of smooth functionals of Brownian motion.

In this paper we first of all
establish a transference result for integration and a transference 
result for $L^2$-approximation, which 
yield
for each of these problems 
and suitably related spaces $H(L_\bs)$ and $H(K_\bb)$
the equivalence
on the level of algorithms.
The transference results only require that
\begin{equation}\label{g54}
\sum_{j \in J} \sigma_j^2 < \infty,
\end{equation}
which is trivially satisfied in the case $d \in \N$.
In the proofs we employ a particular
isometric isomorphism between Hilbert spaces with Gaussian and with
Hermite kernels, see Theorem~\ref{t1}, which is due to \citet{GHHR2021}.

For integration with any $d$ the transference result reads as follows. 
Assume that the shape and base parameters are related by
\[
1-\beta_j = \frac{1}{1+2\sigma_j^2}
\]
for every $j \in J$. Moreover, 
let $e_0(M)$ denote the norm of the integration functional on $H(M)$.
Then for every $n \in \N$ there exists a bijection
$A \mapsto B$ on the set of all $n$-point quadrature formulas
such that 
\[
e(A,L_{\bs}) = e_0(L_\bs) \cdot e(B, K_\bb)
\]
for the worst-case errors $e(A,L_\bs)$ and $e(B,K_\bb)$
of $A$ on the unit ball in $H(L_\bs)$ and of
$B$ on the unit ball in $H(K_\bb)$. See Theorem~\ref{t3}.
The bijection is given
explicitly in Remark~\ref{r7}, and the worst-case cost of $A$
and $B$ coincide also in the case $d=\infty$. Consequently,
the minimal errors satisfy
\[
e_n(L_{\bs}) = e_0(L_\bs) \cdot e_n(K_\bb)
\]
for every $n \in \N$.

For $L^2$-approximation with any $d$ an analogous transference result is 
established in Theorem~\ref{t2} and Remark~\ref{r5}, assuming
that the shape and base parameters are related by
\[
1 - \beta_j = \frac{2}{1+ \left(1+8 \sigma_j^2 \right)^{1/2}}
\]
for every $j \in J$. 

Let $d = \infty$.
We combine the transference results 
with known results for the decay of the minimal errors
$e_n(K_\bb)$ from \citet{GneEtAl24} to determine the decay of
the minimal errors $e_n(L_\bs)$.
For any non-increasing sequence $\bee:=(e_n)_{n \in \N}$ 
of positive real numbers its decay is given by
\begin{equation}\label{g4}
\decay(\bee) := 
\sup\{\tau > 0 \colon 
\sup_{n \in \N} \left(e_n \cdot n^\tau \right) < \infty\}
\end{equation}
with the convention that $\sup \emptyset := 0$. Put
\[
\rho:=   \liminf_{j\to \infty}
\frac{\ln(1/\sigma_j^2)}{\ln(j)},
\]
and assume that \eqref{g54} is satisfied, which implies $1 \leq \rho 
\leq \infty$. Under a very mild assumption on the cost function
$\bx \mapsto \cost(\bx)$, see \eqref{g82}, we obtain
\[
\decay \left( (e_n(L_\bs))_{n \in \N} \right)
= \frac{1}{2} (\rho - 1),
\]
see Theorem~\ref{Thm:InfDimInt} for integration and 
Theorem~\ref{Thm:InfDimApp} for $L^2$-approximation. 
Algorithms showing that the polynomial decay rate of
the minimal errors $e_n(L_\bs)$ is at least $(\rho-1)/2$
are obtained by 
applying the transference results to properly chosen
multivariate decomposition methods on the spaces $H(K_\bb)$,
see Section~\ref{SUBSEC:integr_IMV}.

Now, we turn to the case $d \in \N$. In the sequel
we survey known results from the literature 
together with the findings from this paper for 
the integration problem. To this end we fix 
sequences $(\sigma_j)_{j \in \N}$ and $(\beta)_{j \in \N}$ of shape
parameters and base parameters, respectively, and we either have
$M_d := L_{(\sigma_1, \dots, \sigma_d)}$ for all $d \in \N$
or $M_d := K_{(\beta_1, \dots, \beta_d)}$ for
all $d \in \N$. 

At first, we 
note that exponential convergence of $e_n(M_d)$ 
holds for every fixed dimension $d$:
For every $d \in \N$ there exists a constant $c>0$ such 
that
\begin{equation}\label{g50}
e_n(M_d) \leq \exp \bigl( -c \cdot n^{1/d}\bigr)
\end{equation}
for all $n \geq 1$.
For $d = 1$ we have a matching lower bound, i.e.,
there exists a constant $c^\prime>0$ 
such that
\begin{equation}\label{g51}
e_n(M_1) \geq \exp \bigl( -c^\prime \cdot n\bigr)
\end{equation}
for all $n \geq 1$. For $d \geq 2$ it is only known that
\begin{equation}\label{g52}
\limsup_{n \to \infty} \left(e_n(M_d) \cdot
\exp \bigl( c \cdot n^{p}\bigr) \right) = \infty
\end{equation}
for all $p > 1/d$ and $c > 0$.

For Hermite kernels all of these results
have been established in \citet{IKLP15}. 
For Gaussian kernels the situation is as follows:
The upper bound \eqref{g50} is due to
\citet{KSW2017} and \citet{KOG21} under the additional assumptions
$\max_{1 \leq j \leq d}\sigma_j < 1/2$ and
$\sigma_1 = \dots = \sigma_d$, respectively;
from the latter one can easily deduce \eqref{g50} in full generality.
The lower bound \eqref{g51} follows from Theorem~\ref{t4} and improves the lower bound 
from \citet{KSW2017}, which is super-exponentially small in $n$.
Moreover, \eqref{g52} is due to
\citet{KSW2017} in the case $\max_{1 \leq j \leq d}\sigma_j < 1/2$, 
and the monotonicity property mentioned above immediately yields 
\eqref{g52} in full generality.

Next, we discuss tractability results, which deal with
the information complexity $\nnorm(\eps,M_d)$ with $0 < \eps < 1$
and $d \in \N$ for the normalized error criterion.
Roughly speaking, $\eps \mapsto \nnorm(\eps,M_d)$ is the inverse
function to $n \mapsto e_n(M_d)/e_0(M_d)$.
Concerning tractability, the exponential convergence
\eqref{g50} suggests to control the behavior of 
$\ln(\nnorm(\eps,M_d))$ as $d+\eps^{-1}\to\infty$ in terms
of $\ln(\eps^{-1})$ and a power of $d$, which leads to
the following definition, see \citet{KSW2017}.
Exponential convergence $(t,\kappa)$-weak tractability, for short
EC-$(t,\kappa)$-WT, holds for $(M_d)_{d \in \N}$ if
\[
\lim_{d+\eps^{-1}\to\infty}
\frac{\ln(\nnorm(\eps,M_d))}{d^t+\ln(\eps^{-1})^{\kappa}} = 0.
\]

At first, we address the case $(t,\kappa)=(1,1)$, where we put
$\gamma_j := \sigma_j$ in the Gaussian case and $\gamma_j := \beta_j$ 
in the Hermite case and assume that 
$(\gamma_j)_{j \in \N}$ 
is non-increasing to obtain the following result: 
EC-$(1,1)$-WT is equivalent to $\lim_{j \to \infty} \gamma_j = 0$.
For Hermite kernels $\lim_{j \to \infty} \beta_j = 0$ 
has been established as a necessary condition in \citet{IKLP15} and 
as a sufficient condition in \citet{IKPW16b}. 
For Gaussian kernels a characterization of
EC-$(1,1)$-WT has not been available. Under the additional
assumption that $\bs$ is bounded by a constant less than $1/2$,
it was known that $\lim_{j \to \infty} \sigma_j = 0$ is a necessary
condition, while exponential convergence of the shape parameters
towards zero had been established as sufficient condition, see
\citet{KSW2017}.

To study the case $t>1$ and $\kappa>0$ we put 
$\gamma_j := \sigma_j$ in the Gaussian case and 
$\gamma_j := 1/(1-\beta_j)$ in the Hermite case 
to obtain the following result: 
If $ (\gamma_j)_{j \in \N}$ is polynomially bounded, 
then EC-$(t,\kappa)$-WT holds for all $t > 1$ and $\kappa>0$.
See Theorems~\ref{t6} and \ref{t8} for more
general results that permit exponentially growing shape parameters
$\sigma_j$ and exponentially shrinking gaps $1-\beta_j$ for the
base parameters $\beta_j$. 
For Hermite kernels the case $t>1$ has not been studied before, and
for Gaussian case it was only known that
EC-$(t,1)$-WT holds for all $t > 1$ and $\kappa\geq 1$, if $\bs$ is non-increasing
and bounded by a constant less than $1/2$, see \citet{KSW2017}.

We close the introduction with some remarks concerning our notation.
For any sequence $\bo := (\omega_j)_{j \in J}$ with
$\omega_j \geq 0$ for every $j \in J$
we use $\ell^2(\bo)$ to denote the corresponding weighted
$\ell^2$-space, i.e.,
\begin{equation}\label{g45}
\ell^2(\bo) := 
\biggl\{ (x_j)_{j \in J} \in \R^J \colon 
\sum_{j \in J} \omega_j \cdot x_j^2 < \infty \biggr\}.
\end{equation}

Let $\bt:=(\tau_j)_{j\in J}$ and
$\bx:=(x_j)_{j\in J}$ be sequences in $\R$ and $z \in \R$. 
We put $\bt \bx := (\tau_j x_j)_{j \in J}$ and
$\bt^{z} := (\tau_j^{z})_{j\in J}$ if $\bt$ is positive, 
i.e., $\tau_j > 0$ for every $j \in J$.
Moreover, $\be-\bt := (1-\tau_j)_{j \in \N}$.
If $\bt$ is positive and $\sum_{j \in J} |\tau_j-1| < \infty$ then
\begin{equation}\label{g44}
\bt_\ast := \prod_{j \in J} \tau_j > 0.
\end{equation}

For any Hilbert space $H(M)$ with reproducing kernel $M$ we use 
$\|\cdot\|_{H(M)}$ and $\langle \cdot,\cdot\rangle_{H(M)}$ to denote 
the corresponding norm and scalar product, respectively.

\section{The Function Spaces}\label{s2}

In this paper we study two different types of
reproducing kernel Hilbert spaces (RKHSs), namely spaces with 
Gaussian kernels and spaces
with a particular kind of Hermite kernels.  
For Gaussian and for
Hermite kernels tensor products are used
to proceed from the univariate case $d=1$ to the multivariate case
$d \in \N$ and the infinite-variate case $d = \infty$.
In this section
we mainly present known results from \citet{SHS2006}, see also 
\citet{SC08} and \citet{Minh10}, and from
\citet{GHHR2021}, most importantly a
particular one-to-one correspondence between both types of RKHSs. 
For basic facts about RKHSs and their kernels we refer to \citet{Aro50}
and \citet{PR16}.

\subsection{Univariate Gaussian Kernels}\label{SUBSEC:univ_gauss}

The univariate Gaussian kernel $\ell_\sigma$ with shape parameter 
$\sigma > 0$ is defined by
\[
\ell_\sigma (x,y) := \exp \left( - \sigma^2 \cdot (x-y)^2 \right)
\]
for $x,y \in \R$. Despite a different use in stochastic analysis,
the Hilbert space $H(\ell_\sigma)$ will be called a Gaussian space
throughout this paper.

For the analysis of 
Gaussian spaces we refer to \citet{SHS2006}, see also 
\citet[Sec.~4.4]{SC08}. In particular, each function
$f\in H(\ell_\sigma)$ is the real part of an entire function $g$ 
restricted to the real line, where $g$ belongs to the complex 
reproducing kernel Hilbert space with kernel $\ell_\sigma$ 
extended to $\C$ in the obvious way. 
Since 
\begin{align*}
|f(x) - f(y)|^2 &=
\langle f, 
\ell_\sigma (\cdot,x) - \ell_\sigma(\cdot,y)\rangle_{H(\ell_\sigma)}^2\\
&\leq \|f\|^2_{H(\ell_\sigma)} \cdot 
\|\ell_\sigma (\cdot,x) - \ell_\sigma(\cdot,y)\|^2_{H(\ell_\sigma)}
\\
&= 2 \, \|f\|^2_{H(\ell_\sigma)} \cdot (1 - \ell_\sigma(x,y))
\end{align*}
for all $x,y \in \R$, the functions from
$H(\ell_\sigma)$ are bounded.
No Gaussian space contains any non-zero polynomial, and
obviously $\ell_\sigma$ is translation-invariant, i.e.,
$\ell_\sigma(x,y) = \ell_\sigma(x-y,0)$.
Moreover, we have the following monotonicity property
of Gaussian spaces:
\begin{equation}\label{g12}
\sigma_1 < \sigma_2  \Rightarrow H(\ell_{\sigma_1})
\subsetneq H(\ell_{\sigma_2})
\end{equation}
with a non-compact continuous identical embedding of norm 
$\sqrt{\sigma_2/\sigma_1}$.

\subsection{Univariate Hermite Kernels}\label{sec:uhk}

Let $0 < \alpha_0 \leq \alpha_1 \leq \dots$ with
\[
\sum_{\nu \in \N} \alpha_\nu^{-1} \cdot \nu^{-1/2} < \infty.
\]
Then 
\[
k(x,y) := \sum_{\nu \in \N_0} \alpha_\nu^{-1} 
\cdot h_\nu(x) \cdot h_\nu(y),
\]
which is well defined for all $x,y \in \R$, yields a
reproducing kernel. The mapping $k$ and the corresponding Hilbert
space $H(k)$ are called a Hermite kernel and a Hermite space,
respectively. 

For the analysis of 
Hermite spaces we refer to \citet{IL15}, \citet{GHHR2021},
and \citet{LPE22}. In particular,
\begin{equation}\label{g10}
H(k) = \left\{ \sum_{\nu \in \N_0} c_\nu \cdot h_\nu \colon
\text{$c_\nu \in \R$ with $\sum_{\nu \in \N_0} \alpha_\nu \cdot
c_\nu^2 < \infty $}
\right\}
\end{equation}
and
\begin{equation}\label{g11}
\|f\|_{H(k)}^2 = \sum_{\nu \in \N_0} \alpha_\nu \cdot 
\left(\int_{\R} f \cdot h_\nu \, d \mu_0\right)^2
\end{equation}
for $f \in H(k)$. Consequently, every Hermite space contains
all polynomials. Hermite kernels are not translation invariant,
since $\lim_{x \to \infty} k(x,x) = \infty$.

In the present paper we consider Hermite spaces with 
\[
\alpha_\nu := \beta^{-\nu}
\]
for $\nu \in \N_0$ with a parameter $0 < \beta < 1$,
which will be called the base parameter of the kernel $k_\beta$
given by
\[
k_\beta (x,y) := 
\sum_{\nu \in \N_0} \beta^\nu \cdot h_\nu(x) \cdot h_\nu(y)
\]
for $x,y \in \R$. The elements of $H(k_\beta)$ are real analytic
functions, and we have the following monotonicity property
of Hermite spaces, which follows immediately from \eqref{g10} and
\eqref{g11}:
\begin{equation}\label{eq:embedding_hermite}
\beta_1 < \beta_2  \Rightarrow H(k_{\beta_1}) \subsetneq H(k_{\beta_2})
\end{equation}
with a compact continuous identical embedding of norm one.

\subsection{Tensor Products of Gaussian Kernels}

In this paper we study $d$-fold tensor products of Gaussian kernels and of
Hermite kernels. For the Gaussian kernels we consider a sequence 
$\bs := (\sigma_j)_{j \in J}$ of shape parameters, i.e., $\sigma_j
> 0$.

In addition to the weighted $\ell^2$-space $\ell^2(\bo)$, see
\eqref{g45}, we also consider the space $\ell^\infty$ of all
bounded sequences in $\R$.
The following observation will be used
in the study of the domains of tensor products of Gaussian and of
Hermite kernels in the case $d=\infty$.

\begin{lemma}\label{l4}
For every positive sequence $\bo := (\omega_j)_{j\in\N}$ 
with $\sum_{j \in \N} \omega_j < \infty$ we have
\[
\ell^\infty \subsetneq \ell^2(\bo) \subsetneq \R^\N
\qquad \text{and} \qquad
\mu(\ell^2(\bo)) = 1.
\]
\end{lemma}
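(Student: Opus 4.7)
The plan is to dispatch the four assertions in the natural order: $\ell^\infty\subseteq\ell^2(\bo)$, then the strict inclusion, then $\ell^2(\bo)\subsetneq\R^\N$, and finally the measure-theoretic claim.

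For $\ell^\infty\subseteq\ell^2(\bo)$ I would simply note that if $\bx\in\ell^\infty$ with bound $C$, then $\sum_{j\in\N}\omega_j x_j^2\le C^2\sum_{j\in\N}\omega_j<\infty$. The strict inclusion requires an unbounded sequence in $\ell^2(\bo)$, which exists because $\sum\omega_j<\infty$ forces $\omega_j\to 0$. Concretely, I would pick a subsequence $(j_k)_{k\in\N}$ with $\omega_{j_k}\le 2^{-k}$, set $x_{j_k}:=k$ and $x_j:=0$ otherwise. Then $\sum_j\omega_j x_j^2\le\sum_k k^2\cdot 2^{-k}<\infty$, while $\bx\notin\ell^\infty$.

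For $\ell^2(\bo)\subsetneq\R^\N$ the inclusion is immediate from the definition of $\ell^2(\bo)$, and strictness follows from the choice $x_j:=1/\sqrt{\omega_j}$, which lies in $\R^\N$ (since $\bo$ is positive) but gives $\sum_j\omega_j x_j^2=\sum_j 1=\infty$.

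The measure-theoretic statement is the only non-trivial step, but it is a standard Tonelli argument. Since $\mu$ is the countable product of $\mu_0$ and $\int_\R x^2\,d\mu_0(x)=1$, Tonelli gives
\[
\int_{\R^\N}\sum_{j\in\N}\omega_j x_j^2\,d\mu(\bx)
=\sum_{j\in\N}\omega_j\int_\R x^2\,d\mu_0(x)
=\sum_{j\in\N}\omega_j<\infty.
\]
Hence $\bx\mapsto\sum_j\omega_j x_j^2$ is finite $\mu$-almost everywhere, i.e., $\mu(\ell^2(\bo))=1$. Strictly speaking one should first check measurability of $\ell^2(\bo)$, which follows from writing it as $\{\bx:\sup_N\sum_{j\le N}\omega_j x_j^2<\infty\}$, a Borel set in the product $\sigma$-algebra. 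I do not foresee a serious obstacle; the only point requiring any thought is the construction in the strict-inclusion step, which hinges on using summability of $\bo$ to guarantee a sufficiently sparse subsequence of small weights.
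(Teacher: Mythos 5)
Your proof is correct and follows essentially the same route as the paper: the measure statement is handled by exactly the same Tonelli computation $\int_{\R^\N}\sum_j\omega_j x_j^2\,d\mu=\sum_j\omega_j<\infty$, and the set inclusions, which the paper simply declares obvious, are the ones you spell out with explicit (and valid) witnesses.
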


\begin{proof}
The first statement obviously holds true, and the second statement
follows from
\[
\int_{\R^\N} \sum_{j \in \N} \omega_j \cdot x_j^2 \, d \mu(\bx) 
= \sum_{j \in \N} \omega_j \cdot \int_{\R} x_j^2 \, d \mu_0(x_j)
=
\sum_{j \in \N} \omega_j < \infty,
\]
which implies that $\sum_{j \in \N} \omega_j \cdot x_j^2 <
\infty$ holds for $\mu$-almost every $\bx \in \R^\N$.
\end{proof}

For every sequence $\bs$ 
and $\bx, \by \in \R^J$ we define 
\begin{equation}\label{eq:kernel_full_domain}
\widetilde{L}_\bs (\bx,\by) 
:= \prod_{j \in J} \ell_{\sigma_j} (x_j,y_j)
= \exp(- \sum_{j \in J} \sigma_j^2 \cdot (x_j-y_j)^2)
\end{equation}
with the convention that $\exp(-\infty) := 0$.

\begin{lemma}\label{l5}
For every sequence $\bs$, we have $H(\widetilde{L}_\bs) \subseteq L^2(\mu)$ with a compact identical 
embedding.
\end{lemma}

\
\begin{proof}
Since $\widetilde{L}_\bs(\bx,\bx) = 1$ for every $\bx \in \R^J$, 
the kernel $\widetilde{L}_\bs$ has finite trace, i.e.,
\[
\int_{\R^J} \widetilde{L}_\bs (\bx,\bx) \, d \mu (\bx) 
< \infty.
\]
The latter implies the statement of the lemma, see, e.g., \citet[Lemma~2.3]{StSc2012}.
\end{proof}

The summability property
\begin{equation}\label{g1a}
\sum_{j \in J} \sigma_j^2 < \infty
\end{equation}
will play an important role in the sequel.
For $d<\infty$ this property
is trivially satisfied and $\R^d=\ell^2(\bs^2)$.
In subsequent sections, we consider integration and $L^2$-approximation, which are trivial if \eqref{g1a} does not hold, see Lemma~\ref{l5i} and Remark~\ref{rem:norm_zero}.

In any case, due to Lemma~\ref{l5}, the integration functional  $f \mapsto
\int_{\R^J} f \, d \mu$
is well-defined and continuous on 
$H(\widetilde{L}_\bs)$, and its representer $h \in H(\widetilde{L}_\bs)$ is given by
\begin{equation}\label{rep_int_rep}
h(\bx) = \int_{\R^{\N}} \widetilde{L}_\bs (\bx,\by) \, d \mu(\by), \quad \bx \in \R^\N.
\end{equation}

\begin{lemma}\label{l5i}
For every sequence $\bs$, with
$\sum_{j\in\N} \sigma^2_j = \infty$
we have 
$\int_{\R^\N} f \, d\mu = 0$
for all $f \in H(\widetilde{L}_\bs)$. 
\end{lemma}

\begin{proof}
We obtain for the norm of the representer $h$ of integration, cf. \eqref{rep_int_rep},
that
\begin{align*}
\|h\|^2_{H( \widetilde{L}_\bs)} &= 
\int_{\R^\N} \! \int_{\R^\N} \widetilde{L}_\bs (\bx,\by) \,
d\mu(\bx) \, d\mu(\by)\\
&=
\prod_{j \in \N}
\int_{\R} \! \int_{\R} \ell_{\sigma_j} (x_j,y_j) \,
d\mu_0(x_j) \, d\mu_0(y_j)\\
&=
\prod_{j \in \N}
\frac{1}{(1+4\sigma_j^2)^{1/2}}\\
&= 0,
\end{align*}
where the second last identity follows from \citet[Eqn.~(1.5)]{KSW2017} (or can be calculated directly). Consequently, we have $h(\bx) = 0$ for every $\bx \in \R^\N$ and the integration functional on $H(\widetilde{L}_\bs)$ is trivial.
\end{proof}

\begin{lemma}\label{l5ii}
For every sequences $\bs$ fulfilling \eqref{g1a}, the following holds true:
\begin{itemize}
\item[(i)]
We have $\mu(\ell^2(\bs^2)) = 1$.
\item[(ii)]
We have $\widetilde{L}_\bs(\by,\bz) = 0$ and $h(\bz) = 0$
for $\by \in \ell^2(\bs^2)$ and $\bz \in \R^J \setminus
\ell^2(\bs^2)$. 
\end{itemize}
\end{lemma}

\begin{proof}
We establish (i) and (ii) in the non-trivial case $d=\infty$.

Naturally, Lemma~\ref{l4} yields (i).


Now let $\by \in \ell^2(\bs^2)$
and $\bz \in \R^\N \setminus \ell^2(\bs^2)$.
Obviously, 
$\widetilde{L}_\bs(\by,\bz) = 0$. 
Furthermore, (i) and identity~\eqref{rep_int_rep} yield
for $\bz \in \R^\N \setminus \ell^2(\bs^2)$ that
\[
h(\bz) = \int_{\ell^2(\bs^2)} \widetilde{L}_\bs (\bz,\by) \, d \mu(\by) = 0.
\]

\end{proof}

Let now $\bs$ be a sequence that satisfies \eqref{g1a}.
For $d = \infty$ we prefer to 
replace the full Cartesian product $\R^\N$ in the definition of
$\widetilde{L}_\bs$ by the proper subset $\ell^2(\bs^2)$ of full measure.
Lemma~\ref{l5ii} reveals that this restriction 
has no impact on the results for integration and
$L^2$-approximation, 
while it simplifies the presentation substantially. 
See \citet[Sec.~4.2]{GHHR2021} for further results on different
domains for $\widetilde{L}_\bs$ in the case $d = \infty$. 

For every $d$ the mapping 
\[
L_\bs := \widetilde{L}_\bs|_{\ell^2(\bs^2) \times \ell^2(\bs^2)}
\]
and the Hilbert space $H(L_\bs)$ are called
a Gaussian kernel and, despite a different use in stochastic analysis,
a Gaussian space, respectively.
Accordingly, $H(L_\bs)$ consists of
real-valued functions on the domain 
\[
\X(L_\bs) := \ell^2(\bs^2).
\]
\subsection{Tensor Products of Hermite Kernels}

For the study of tensor products of Hermite kernels we consider a
sequence $\bb := (\beta_j)_{j \in J}$ of base parameters, i.e., $0
< \beta_j < 1$. Observe that 
\begin{equation}\label{g46}
k_{\beta_j}(x,x)\geq 1
\end{equation}
for all $j\in J$ and $x\in\R$, since $h_0=1$.

For every sequence $\bb$ with
\begin{equation}\label{g1b}
\sum_{j \in J} \beta_j < \infty
\end{equation}
we define the Hermite kernel $K_\bb$ by
\[
K_\bb (\bx,\by) := \prod_{j \in J} k_{\beta_j} (x_j,y_j)
\]
for $\bx, \by \in \X(K_\bb)$ with the maximal domain 
\[
\X(K_\bb) := \{ \bx \in \R^J \colon \prod_{j \in J} k_{\beta_j} (x_j,x_j)
< \infty \}.
\]
If $d < \infty$ then \eqref{g1b} is trivially satisfied and 
$\X(K_\bb) = \R^d$.

\begin{lemma}\label{l6}
Given~\eqref{g1b}, we have
\begin{itemize}
\item[(i)]
$\X(K_\bb) = \ell^2(\bb)$ and $\mu(\ell^2(\bb)) = 1$,
\item[(ii)]
$H(K_\bb) \subseteq L^2(\mu)$ with a compact identical embedding. 
\end{itemize}
\end{lemma}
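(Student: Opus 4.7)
The plan is to mirror the proof of Lemma~\ref{l5}, treating (i) and (ii) in turn. That argument relied on the triviality $L_\bs(\bx,\bx) = 1$; for Hermite kernels one must instead understand the diagonal $k_\beta(x,x) = \sum_{\nu \in \N_0} \beta^\nu h_\nu(x)^2$. Mehler's formula gives the closed form
\[
k_\beta(x,x) = \frac{1}{\sqrt{1-\beta^2}} \exp\left( \frac{\beta x^2}{1+\beta} \right)
\]
for $0 < \beta < 1$, and taking logarithms yields a clean additive decomposition that drives both parts of the lemma.

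For (i), I would apply two-sided estimates to $\log k_\beta(x,x)$. Since $0 < \beta_j < 1$, the coefficient $\beta_j/(1+\beta_j)$ is comparable to $\beta_j$, and assumption \eqref{g1b} forces $\beta_j \to 0$, so eventually $-\tfrac{1}{2}\log(1-\beta_j^2)$ is comparable to $\beta_j^2$ and hence summable. Combining these observations with the Mehler expression shows that
\[
\prod_{j \in J} k_{\beta_j}(x_j,x_j) < \infty \iff \sum_{j \in J} \beta_j x_j^2 < \infty,
\]
i.e.\ $\X(K_\bb) = \ell^2(\bb)$. The measure statement $\mu(\ell^2(\bb)) = 1$ is then immediate from Lemma~\ref{l4} applied to $\bo := \bb$.

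For (ii), I would replicate the trace argument used for Lemma~\ref{l5}(ii). By monotone convergence, the orthonormality of $(h_\nu)_{\nu \in \N_0}$ in $L^2(\mu_0)$ gives $\int_\R k_\beta(x,x) \, d\mu_0(x) = \sum_{\nu \in \N_0} \beta^\nu = 1/(1-\beta)$. A second application of Tonelli (to the nonnegative integrand, passing to monotone partial products in the case $d=\infty$) then yields
\[
\int_{\X(K_\bb)} K_\bb(\bx,\bx) \, d\mu(\bx) = \prod_{j \in J} \frac{1}{1-\beta_j},
\]
and this infinite product converges iff $\sum_{j \in J} -\log(1-\beta_j) < \infty$, which in turn holds under \eqref{g1b} because $-\log(1-\beta_j) \sim \beta_j$ as $\beta_j \to 0$. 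Hence $K_\bb$ has finite trace with respect to $\mu$, and \citet[Lemma~2.3]{StSc2012} supplies compactness of the identical embedding $H(K_\bb) \hookrightarrow L^2(\mu)$, exactly as in the proof of Lemma~\ref{l5}.(ii).

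The main technical hurdle is the two-sided control of $k_\beta(x,x)$ in (i). The lower bound $k_\beta(x,x) \geq 1 + \beta x^2$, obtained by retaining only $\nu = 0, 1$ (recall $h_0 = 1$ and $h_1(x) = x$), already forces $\X(K_\bb) \subseteq \ell^2(\bb)$; the reverse inclusion requires an upper bound on the exponential growth of $k_\beta(x,x)$ in $x$, and Mehler's formula is the cleanest source. The remaining technicalities, namely interchanging sums, products, and integrals when $d = \infty$, are routine thanks to the nonnegativity of each $k_{\beta_j}$ and the monotonicity of partial products.
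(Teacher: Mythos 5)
Your proof is correct, and part (ii) coincides with the paper's argument verbatim: both compute $\int_\R k_{\beta_j}(x,x)\,d\mu_0(x)=(1-\beta_j)^{-1}$, use $k_{\beta_j}(x,x)\geq 1$ together with \eqref{g1b} to get a finite trace $\prod_{j\in J}(1-\beta_j)^{-1}<\infty$, and then invoke \citet[Lemma~2.3]{StSc2012}. The difference is in part (i): the paper does not prove $\X(K_\bb)=\ell^2(\bb)$ at all but simply cites \citet[Prop.~3.18]{GHHR2021} (and Lemma~\ref{l4} for $\mu(\ell^2(\bb))=1$, exactly as you do), whereas you give a self-contained argument via Mehler's formula, $k_\beta(x,x)=(1-\beta^2)^{-1/2}\exp\bigl(\beta x^2/(1+\beta)\bigr)$, from which $\log k_{\beta_j}(x_j,x_j)$ decomposes into a summable constant part and a term comparable to $\beta_j x_j^2$, yielding the equivalence $\prod_{j}k_{\beta_j}(x_j,x_j)<\infty\iff \bx\in\ell^2(\bb)$. (Your elementary lower bound $k_\beta(x,x)\geq 1+\beta x^2$ already gives the inclusion $\X(K_\bb)\subseteq\ell^2(\bb)$ without Mehler; the closed form is only needed for the converse.) Your route makes the lemma independent of the external reference at the cost of importing Mehler's formula; the paper's route keeps the proof short and defers the identification of the domain to a general result on Hermite spaces. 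Both are sound.
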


\begin{proof}
Assume that~\eqref{g1b} is satisfied.
We establish (i) in the non-trivial case $d=\infty$.
See \citet[Prop.\ 3.18]{GHHR2021} for a general result that
yields $\X(K_\bb) = \ell^2(\bb)$. Lemma~\ref{l4} implies 
$\mu(\ell^2(\bb)) = 1$,
see also \citet[Prop.\ 3.10]{GHHR2021} for a general result.

By orthogonality of the Hermite polynomials we have
\[
\int_{\R} k_{\beta_j}(x_j,x_j) \, d \mu_0(x_j) = 
\sum_{\nu \in \N_0} \beta_j^\nu = (1-\beta_j)^{-1}.
\]
Hence we may use \eqref{g46} and \eqref{g1b} to obtain
\[
\int_{\ell^2(\bb)} K_\bb (\bx,\bx) \, d \mu(\bx)
= 
\int_{\R^J} \prod_{j \in J} k_{\beta_j}(x_j,x_j)
\, d \mu(\bx) 
= 
\prod_{j \in J} (1-\beta_j)^{-1} < \infty,
\]
i.e., the kernel $K_\bb$ has finite trace. The latter
implies (ii), see, e.g., \citet[Lemma~2.3]{StSc2012}.
\end{proof}

The Hilbert space $H(K_\bb)$, which due to Lemma~\ref{l6} consists of 
real-valued functions on the domain
\[
\X(K_\bb) = \ell^2(\bb),
\]  
is called a Hermite space.
This domain is a the proper subset of $\R^\N$ of full measure
in the case $d = \infty$ and the full Cartesian product $\R^d$ in the case
$d< \infty$.

\subsection{The Isometric Isomorphism}\label{SEC:Isom_Isom}

As we will see, the relation
\begin{equation}\label{g2}
1 - \beta_j = \frac{2}{1+ \left(1+8 \sigma_j^2 \right)^{1/2}}
\end{equation}
for every $j \in J$ is of key importance. First of all, 
we note that \eqref{g2} defines a bijection between the set 
of shape parameters $\sigma_j > 0$ and the set of 
base parameters $0 < \beta_j < 1$. 

\begin{rem}\label{r4}
Consider the case $d = \infty$,
and assume that \eqref{g2} is satisfied for every $j \in \N$.
If $\lim_{j \to \infty} \sigma_j = 0$ or $\lim_{j \to \infty}
\beta_j = 0$, then $\beta_j \asymp \sigma_j^2$ and therefore
$\ell^2(\bs^2) = \ell^2(\bb)$ as vector spaces. Moreover, 
\eqref{g1a} and \eqref{g1b} are equivalent. We conclude that
\eqref{g2} defines a bijection between the set of square-summable
sequences of shape parameters $\sigma_j > 0$ and the set of summable
sequences of base parameters $0 < \beta_j < 1$. 
\end{rem}

Let $\bc := (c_j)_{j \in J}$ be a positive sequence with
$\sum_{j \in J} \omega_j < \infty$ for $\omega_j := |c_j-1|$. 
We define $\phi_\bc \colon \R^J \to {[0,\infty[}$ by
\[
\phi_\bc (\bx) :=
\begin{cases}
\exp\Bigl(-\sum_{j \in J} \frac{c_j^2-1}{4} \cdot x_j^2 \Bigr) &
\text{if $\bx \in \ell^2(\bo)$,} \\
0 & \text{otherwise.}
\end{cases}
\]
Note that 
$\bx \mapsto \bc \bx$ and $\bx \mapsto \bc^{-1} \bx$
define bijections on $\ell^2(\bs^2)$. 
We define a linear mapping $Q_\bc$ on 
the space of all functions $f \colon \ell^2(\bs^2) \to \R$ by
\[
Q_\bc f (\bx) := \bc_\ast^{1/2} \cdot \phi_\bc(\bx) \cdot f(\bc \bx)
\]
for $\bx \in \ell^2(\bs^2)$; cf.\ \eqref{g44} for the
definition of $\bc_\ast$.

See \citet[Thm.~5.8]{GHHR2021} for the following result%
\footnote{Instead of $k_{\beta_j}$ and $K_\bb$,
the kernels $(1-\beta_j) k_{\beta_j}$ and their tensor product are 
studied in \citet{GHHR2021}.
}.

\begin{theo}\label{t1}
Assume that \eqref{g1a} is satisfied. Moreover,
assume that \eqref{g2} and
\begin{equation}\label{g3}
c_j = \left( 1 + 8 \sigma_j^2 \right)^{1/4}
\end{equation}
are satisfied for every $j \in J$. 
Then $Q_\bc$ defines an isometric isomorphism 
on $L^2(\mu)$ and between $H((\be-\bb)_\ast K_\bb)$ and $H(L_\bs)$.
\end{theo}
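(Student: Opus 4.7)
The plan is to reduce everything to a single-coordinate kernel identity, tensorize it, and then invoke the standard RKHS principle that a pointwise factorization of kernels induces an isometric isomorphism of RKHSs. The $L^2(\mu)$-isometry is handled separately by a direct change-of-variables computation.

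The heart of the matter is the univariate identity
\begin{equation*}
\ell_\sigma(x,y) = c \cdot \phi_c(x)\,\phi_c(y) \cdot (1-\beta)\,k_\beta(cx, cy),
\end{equation*}
where $c = (1+8\sigma^2)^{1/4}$ and $1-\beta = 2/(1+c^2)$. Using Mehler's formula to write $k_\beta(x,y) = (1-\beta^2)^{-1/2}\exp\bigl((2\beta xy - \beta^2(x^2+y^2))/(2(1-\beta^2))\bigr)$, the right-hand side becomes a Gaussian in $(x,y)$, and the identity reduces to checking three elementary algebraic relations: the constant prefactor, the coefficient of $x^2+y^2$, and the coefficient of $xy$. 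Each collapses after using $\beta = (c^2-1)/(c^2+1)$, $1-\beta^2 = 4c^2/(c^2+1)^2$, and $c^4-1 = 8\sigma^2$.

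Next I would take the product of this identity over $j \in J$ to obtain the tensorized kernel identity
\begin{equation*}
L_\bs(\bx,\by) = \bc_\ast \cdot \phi_\bc(\bx)\,\phi_\bc(\by) \cdot (\be-\bb)_\ast\, K_\bb(\bc\bx,\bc\by)
\end{equation*}
for $\bx,\by \in \ell^2(\bs^2)$. In the case $d=\infty$, the Taylor expansion $c_j - 1 = 2\sigma_j^2 + O(\sigma_j^4)$ together with \eqref{g1a} yields $\sum_j |c_j-1| < \infty$, so that $\bc_\ast \in (0,\infty)$, $\phi_\bc$ is well defined and strictly positive on $\ell^2(\bs^2)$, and $\bx\mapsto\bc\bx$ is a bijection of $\ell^2(\bs^2) = \ell^2(\bb) = \X(K_\bb)$ (cf.\ Remark~\ref{r4}). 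At this point I would invoke the standard RKHS principle: whenever two kernels factor as $K_2(\bx,\by) = \psi(\bx)\psi(\by)\,K_1(\Phi(\bx),\Phi(\by))$ with $\Phi$ a bijection between the underlying domains, the operator $f \mapsto \psi \cdot (f\circ\Phi)$ is an isometric isomorphism $H(K_1) \to H(K_2)$. Applied with $\psi = \bc_\ast^{1/2}\phi_\bc$, $\Phi(\bx) = \bc\bx$, $K_1 = (\be-\bb)_\ast K_\bb$, and $K_2 = L_\bs$, this operator is precisely $Q_\bc$.

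The $L^2(\mu)$-part reduces coordinatewise to the observation that $c\,\phi_c(x)^2\,d\mu_0(x) = c\,e^{-c^2 x^2/2}/\sqrt{2\pi}\,dx$ is the push-forward of $d\mu_0$ under $y = cx$, so $\|Q_c f\|_{L^2(\mu_0)} = \|f\|_{L^2(\mu_0)}$ by a change of variables. For $d=\infty$ this is lifted via truncation to finitely many coordinates together with the product structure of $\mu$, using $\bc_\ast < \infty$ to control the infinite product of density factors. The main obstacle is not the algebra but the careful bookkeeping of domains and product convergences in the $d=\infty$ case; once the summability $\sum_j|c_j-1|<\infty$ and the identifications $\X(L_\bs) = \X(K_\bb) = \ell^2(\bs^2)$ are in hand, the remaining verifications are routine.
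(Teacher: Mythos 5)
Your proposal is correct, but there is nothing in this paper to compare it against: the paper does not prove Theorem~\ref{t1} at all, it imports it from \citet[Thm.~5.8]{GHHR2021} (``See \citet[Thm.~5.8]{GHHR2021} for the following result''). What you have written is therefore a self-contained derivation of the cited result, and the algebra checks out. The univariate identity $\ell_\sigma(x,y)=c\,\phi_c(x)\phi_c(y)(1-\beta)\,k_\beta(cx,cy)$ is indeed the crux: with $\beta=(c^2-1)/(c^2+1)$, $1-\beta^2=4c^2/(c^2+1)^2$ and $c^4-1=8\sigma^2$ one gets the prefactor $c\cdot\tfrac{2}{c^2+1}\cdot\tfrac{c^2+1}{2c}=1$, the $xy$-coefficient $\beta c^2/(1-\beta^2)=(c^4-1)/4=2\sigma^2$, and the $x^2$-coefficient $-\tfrac{c^2-1}{4}-\tfrac{(c^2-1)^2}{8}=-\tfrac{(c^2-1)(c^2+1)}{8}=-\sigma^2$, matching $\exp(-\sigma^2(x-y)^2)$ exactly. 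Tensorizing and invoking the pull-back/multiplication principle for reproducing kernels is legitimate provided $\phi_\bc>0$ on $\ell^2(\bs^2)$ and $\bx\mapsto\bc\bx$ is a bijection of $\ell^2(\bs^2)=\ell^2(\bb)$; both hold under \eqref{g1a} since $c_j^2-1\asymp\sigma_j^2$, cf.\ Remark~\ref{r4} and Lemma~\ref{l1}, and this is exactly where the summability $\sum_j|c_j-1|<\infty$ that you flag must be spelled out. For the $L^2(\mu)$-part, your change of variables is precisely the paper's Lemma~\ref{l2} applied with $\bt=\bc$, i.e.\ $t_\bc\mu=\bc_\ast\phi_\bc^2\,\mu$, which gives $\|Q_\bc f\|_{L^2(\mu)}=\|f\|_{L^2(\mu)}$; surjectivity on $L^2(\mu)$ follows because $Q_\bc^{-1}$ has the same multiplicative--composition form. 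In short: the approach is sound and complete modulo the routine domain and product-convergence bookkeeping you already identify; it is a perfectly serviceable replacement for the external citation.
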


By definition, the value of $Q_\bc f$ at $\bx$ is determined by
the value of $f$ at $\bc \bx$. The analogous property for
$Q_\bc^{-1}$ reads as follows.

\begin{lemma}\label{l1}
Assume that \eqref{g1a} is satisfied. Moreover,
assume that \eqref{g3} is satisfied for every $j \in J$.
Then we have $\phi_\bc > 0$ on $\ell^2(\bs^2)$ and
\[
Q_\bc^{-1} f(\bx) = 
\frac{1}{\bc_\ast^{1/2} \phi_\bc(\bc^{-1}\bx)} \cdot f(\bc^{-1}\bx)
\]
for all $f \colon \ell^2(\bs^2) \to \R$ and $\bx \in \ell^2(\bs^2)$.
\end{lemma}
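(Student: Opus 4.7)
The plan has two halves: first verify that $\phi_\bc$ is strictly positive on all of $\ell^2(\bs^2)$ (which in particular requires the summability assumption $\sum_j |c_j-1|<\infty$ used in the definition of $\phi_\bc$ to hold for the specific sequence in \eqref{g3}), and then invert the defining formula for $Q_\bc$ by straightforward substitution using that $\bx \mapsto \bc\bx$ is a bijection on $\ell^2(\bs^2)$.

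The key elementary calculation is the chain of inequalities
\[
0 \leq c_j - 1 \leq c_j^2 - 1 = (1+8\sigma_j^2)^{1/2} - 1 = \frac{8\sigma_j^2}{(1+8\sigma_j^2)^{1/2}+1} \leq 4\sigma_j^2,
\]
which follows directly from \eqref{g3}. Summing over $j$ and invoking \eqref{g1a} gives $\sum_{j\in J}|c_j-1| \leq 4\sum_{j\in J}\sigma_j^2<\infty$, so $\bc_\ast > 0$ and $\phi_\bc$ is well defined. Moreover, for any $\bx \in \ell^2(\bs^2)$ the same bound shows $\bx \in \ell^2(\bo)$ for $\omega_j := |c_j-1|$ and
\[
0 \leq \sum_{j\in J}\frac{c_j^2-1}{4}x_j^2 \leq \sum_{j\in J}\sigma_j^2 x_j^2 < \infty,
\]
whence $\phi_\bc(\bx)$ is the exponential of a finite non-positive number and therefore strictly positive.

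With positivity and the bijectivity of $\bx \mapsto \bc\bx$ on $\ell^2(\bs^2)$ in hand (the inverse being $\bx \mapsto \bc^{-1}\bx$, which preserves $\ell^2(\bs^2)$ since $c_j \geq 1$), the inverse of $Q_\bc$ is read off the defining identity $Q_\bc f(\bx) = \bc_\ast^{1/2}\phi_\bc(\bx)f(\bc\bx)$: substituting $\bc^{-1}\bx$ for $\bx$ and dividing by the strictly positive factor $\bc_\ast^{1/2}\phi_\bc(\bc^{-1}\bx)$ recovers $f(\bx)$ from $(Q_\bc f)(\bc^{-1}\bx)$, which is exactly the claimed formula. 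There is no genuine obstacle here; the one substantive ingredient is the inequality displayed above, which simultaneously delivers summability of $|c_j-1|$, the containment $\ell^2(\bs^2) \subseteq \ell^2(\bo)$, and finiteness of the quadratic form in the exponent of $\phi_\bc$ on the domain of interest.
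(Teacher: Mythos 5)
Your proposal is correct and follows essentially the same route as the paper: the paper's proof likewise reduces the lemma to checking $\phi_\bc>0$ on $\ell^2(\bs^2)$ and justifies this by the relation $c_j^2-1\asymp\sigma_j^2$, which your explicit chain $0\leq c_j-1\leq c_j^2-1\leq 4\sigma_j^2$ simply spells out. The inversion step is treated as immediate in both arguments.
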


\begin{proof}
We only have to verify that $\phi_\bc > 0$ on $\ell^2(\bs^2)$.
In the non-trivial case $d=\infty$
the latter follows from $c_j^2 - 1 \asymp \sigma_j^2$,
which holds due to \eqref{g1a} and \eqref{g3}.
\end{proof}

\section{The Worst-Case Setting}\label{s2a}

$L^2$-approximation and integration on Gaussian and on Hermite spaces 
will be studied within the following framework,
where $\X := \ell^2(\bs^2)$ and $M := L_\bs$ under the
assumption \eqref{g1a} or $\X := \ell^2(\bb)$ and $M := K_\bb$ under the
assumption \eqref{g1b}. Recall that $\mu(\X) = 1$ and $H(M)
\subseteq L^2(\mu)$ in both cases, see Lemmata~\ref{l5},  \ref{l5ii}.(i), and \ref{l6}.

For $L^2$-approximation of functions $f \in H(M)$
we consider linear sampling methods 
\begin{equation}\label{g5}
A(f) := \sum_{i=1}^n f (\bx_i) \cdot a_i
\end{equation}
with $n \in \N$, nodes $\bx_i \in \X$, and coefficients
$a_i \in L^2(\mu)$.
As error criterion we consider the worst-case error of $A$ on the 
unit ball in $H(M)$, which is defined by
\[
e(A,M) := \sup_{\|f\|_{H(M)} \leq 1} \| f - A(f)\|_{L^2(\mu)}.
\]
For integration of functions $f \in H(M)$ 
we consider quadrature formulas 
\begin{equation}\label{g6}
A(f) := \sum_{i=1}^n f (\bx_i) \cdot a_i
\end{equation}
with $n$ and $\bx_i$ as before, but with
coefficients $a_i \in \R$.
The worst-case error of $A$ on the unit ball in $H(M)$ is defined by
\[
e(A,M) := \sup_{\|f\|_{H(M)} \leq 1} | I (f) - A(f) |,
\]
where
\[
I(f) := \int_{\X} f \, d \mu.
\]

The worst-case cost of an algorithm $A$ of the form \eqref{g5} or 
\eqref{g6} is defined by
\begin{equation}\label{eq:worst_case_cost}
\cost (A) := \sum_{i=1}^n \cost(\bx_i),
\end{equation}
where $\cost \colon \X \to {[0,\infty[} \cup \{\infty\}$ denotes
the cost of a single function evaluation,

Of course, the most natural choice of the cost function is 
$\cost(\bx) := 1$ for every $\bx \in \X$, which means that the 
functions from $H(M)$ may be evaluated anywhere in the domain $\X$ 
at a constant cost one. We will use this model throughout the paper
in the case $d \in \N$, where $\X = \R^d$.
In the case $d=\infty$, where $J=\N$ and $\X$ is an
infinite-dimensional space,
we employ the unrestricted subspace 
sampling model, which has been introduced in \citet{KuoEtAl10}. 
This model is based on a non-decreasing cost function 
$\$\colon \N_0 \to \left[1,\infty\right[$ 
 in the following way. For $\bx \in \R^\N$ the 
number of active variables is defined by
\[
\Act(\bx) :=\#\{j\in\N \colon x_j \neq 0\},
\]
and the evaluation of any function $f \in H(M)$ is permitted at
any point $\bx$ for which $\Act(\bx)$ is finite; in this case 
the corresponding cost is given by $\$ (\Act(\bx))$. 
We add that $\Act(\bx) <\infty$ already implies $\bx\in\X$.

Accordingly, we define
\begin{equation}\label{eq:ev_cost}
\cost(\bx) := 
\begin{cases}
\$(\Act(\bx)) & \text{if $\Act(\bx)<\infty$,}\\
\infty & \text{otherwise}
\end{cases}
\end{equation}
for $x \in \X$.

As the key quantity, we study the 
$n$-th minimal worst-case error, which is defined by
\begin{equation}\label{eq:nth_min_error}
e_n(M) := \inf \{e(A,M) \colon \cost(A) \leq n\}
\end{equation}
for $n \in \N$. For $n=0$ the corresponding quantity is
\[
e_0(M) := e(A,M) 
\]
with $A:=0$, i.e.,
\[
e_0(M) = \sup_{\|f\|_{H(M)} \leq 1} \|f\|_{L^2(\mu)} 
\]
for $L^2$-approximation and
\[
e_0(M) = \sup_{\|f\|_{H(M)} \leq 1} |I(f)|
\]
for integration.

For $d \in \N$ explicit formulas are available%
for the norm of the embedding of $H(M)$ into $L^2(\mu)$,
see \citet{SW2018} and \citet{IKPW16b},
and for the norm of the integration functional on $H(M)$,
see \citet{KSW2017} and \citet{IKLP15}.
In the following lemma we cover the case $d = \infty$, too.

\begin{lemma}\label{l7}
If \eqref{g1a} is satisfied then 
\[
e_0(L_\bs) = \prod_{j \in J} 
\frac{\sqrt{2}}{\left(1+ \left(1+8 \sigma_j^2 \right)^{1/2}\right)^{1/2}}
\]
for $L^2$-approximation and
\[
e_0(L_\bs) = \prod_{j \in J} \frac{1}{\left(1+ 4\sigma_j^2\right)^{1/4}}
\]
for integration.
If \eqref{g1b} is satisfied then
\[
e_0(K_\bb) = 1
\]
for $L^2$-approximation and for integration.
\end{lemma}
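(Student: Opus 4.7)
My plan is to use two standard RKHS identities. For integration, the Riesz representer of $I$ on $H(M)$ is $h(\bx) = \int_{\X} M(\bx,\by)\,d\mu(\by)$, so
\[
e_0(M)^2 \;=\; \|I\|^2 \;=\; \int_{\X}\!\!\int_{\X} M(\bx,\by)\,d\mu(\bx)\,d\mu(\by),
\]
and all integrals make sense by the finite-trace conditions established in Lemmata~\ref{l5} and \ref{l6}. For $L^2$-approximation, $e_0(M)$ equals the operator norm of the identical embedding $H(M)\hookrightarrow L^2(\mu)$. I will treat the Hermite case first, and then deduce the Gaussian case, handling integration by a direct computation and $L^2$-approximation via the isometric isomorphism of Theorem~\ref{t1}.

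\textbf{The Hermite case.} For the univariate kernel $k_\beta$ I would use the expansion of any $f\in H(k_\beta)$ in the basis $(h_\nu)_{\nu\in\N_0}$. Writing $c_\nu := \int_{\R} f\cdot h_\nu\,d\mu_0$, the norm identity \eqref{g11} gives $\|f\|_{k_\beta}^2=\sum_\nu \beta^{-\nu}c_\nu^2$, while $\|f\|_{L^2(\mu_0)}^2=\sum_\nu c_\nu^2$ and $I(f)=c_0$. Since $\beta^{-\nu}\ge 1$, both $\|f\|_{L^2(\mu_0)}^2$ and $|I(f)|^2$ are bounded by $\|f\|_{k_\beta}^2$, with equality at $f=h_0\equiv 1$. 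The tensor product structure of $H(K_\bb)$ yields an analogous bound on $\R^J$: the coefficients in the Hermite basis $(\bh_\bnu)$ satisfy $\|f\|_{K_\bb}^2=\sum_{\bnu}\bigl(\prod_j\beta_j^{-\nu_j}\bigr)c_\bnu^2\ge \|f\|_{L^2(\mu)}^2\ge |I(f)|^2=c_\bn^2$, with equality attained at $f\equiv 1$. This gives $e_0(K_\bb)=1$ in both settings.

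\textbf{Gaussian integration.} Here I would invoke the integration identity above for $M=L_\bs$ together with Fubini and the product structure, reducing to the univariate Gaussian integral
\[
\int_{\R}\!\!\int_{\R} \exp\!\bigl(-\sigma_j^2(x-y)^2\bigr)\,d\mu_0(x)\,d\mu_0(y) \;=\; E\bigl[\exp(-\sigma_j^2 Z^2)\bigr]
\]
with $Z\sim N(0,2)$, which evaluates by completing the square to $(1+4\sigma_j^2)^{-1/2}$. Taking square roots yields the claimed product. Convergence of the infinite product in the case $d=\infty$ is guaranteed by \eqref{g1a}, since $(1+4\sigma_j^2)^{-1/4}=1+O(\sigma_j^2)$, and Fubini is justified by the finite trace of $L_\bs$ from Lemma~\ref{l5}.

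\textbf{Gaussian $L^2$-approximation.} For this part I would not compute directly but instead exploit Theorem~\ref{t1}. The map $Q_\bc$ is simultaneously an isometric isomorphism between $H((\be-\bb)_\ast K_\bb)$ and $H(L_\bs)$ and an isometry on $L^2(\mu)$, hence the embedding norms into $L^2(\mu)$ of these two RKHSs coincide. Since for any $\alpha>0$ the spaces $H(\alpha K)$ and $H(K)$ agree as sets with $\|\cdot\|_{\alpha K}=\alpha^{-1/2}\|\cdot\|_K$, rescaling gives
\[
e_0(L_\bs)\;=\;(\be-\bb)_\ast^{1/2}\cdot e_0(K_\bb)\;=\;(\be-\bb)_\ast^{1/2}.
\]
Substituting the relation \eqref{g2}, namely $1-\beta_j=2/(1+(1+8\sigma_j^2)^{1/2})$, into the product $(\be-\bb)_\ast=\prod_j(1-\beta_j)$ produces the stated formula. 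The main thing to check is that $\bc$ from \eqref{g3} satisfies the summability hypothesis of $\phi_\bc$ and $(\be-\bb)_\ast$, but this follows from \eqref{g1a} together with Remark~\ref{r4}, so no serious obstacle arises.
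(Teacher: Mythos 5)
Your proof is correct, but it takes a genuinely more self-contained route than the paper. The paper's proof simply cites the known univariate constants ($e_0(\ell_\sigma)$ for approximation from Sloan--Wo\'zniakowski, for integration from Kuo--Sloan--Wo\'zniakowski, and $e_0(k_\beta)=1$ from the Hermite-space literature) and combines them via the tensor-product structure of the embedding and of the integration functional. You instead derive all the univariate ingredients: the Hermite case from the norm identity \eqref{g11} (with the extremal function $h_0\equiv 1$), the Gaussian integration constant from the identity $e_0(M)^2=\int\int M\,d\mu\,d\mu$ and the elementary Gaussian integral $E[\exp(-\sigma^2Z^2)]=(1+4\sigma^2)^{-1/2}$ for $Z\sim N(0,2)$, and — most notably — the Gaussian $L^2$-approximation constant by routing through Theorem~\ref{t1} and the rescaling $e_0(\alpha K)=\alpha^{1/2}e_0(K)$, which gives $e_0(L_\bs)=(\be-\bb)_\ast^{1/2}$ directly from \eqref{g2}. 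This last step is a nice internal-consistency argument (and not circular, since Theorem~\ref{t1} is imported from elsewhere and does not rest on this lemma), whereas the paper just quotes the formula. What your approach buys is independence from four external references; what the paper's buys is brevity. The only point at the same level of informality in both treatments is the tensor-product factorization $e_0(M)=\prod_j e_0(m_j)$ in the case $d=\infty$, which you assert via the tensor Hermite basis and Fubini and the paper handles by citation; neither is a gap by the paper's own standard, though if you wanted full rigor you would justify the interchange of product and integral (dominated convergence works, since all factors lie in $[0,1]$ in the Gaussian case and the trace is finite in the Hermite case).
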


\begin{proof}
We use the tensor product structure of the
embedding of $H(M)$ into $L^2(\mu)$ and of the integration
functional on $H(M)$, see
\citet[Sec.~A.4--A.6]{GHHR2021} for a general account in the case
$d=\infty$, together with the known results for $d=1$. First of all,
\[
e^2_0(\ell_{\sigma_j})= 
\frac{2}{1+ \left(1+8 \sigma_j^2 \right)^{1/2}} < 1
\]
for $L^2$-ap\-prox\-i\-mation, 
see, e.g., \citet[Eqn.~(10)]{SW2018},
and
\[
e^4_0(\ell_{\sigma_j})= \frac{1}{1+ 4\sigma_j^2} < 1
\]
for integration,
see \citet[Eqn.~(1.5)]{KSW2017},
while $e_0(k_{\beta_j})=1$ for $L^2$-ap\-prox\-i\-mation and for
integration,
see \citet[p.~104]{IKPW16b} and \citet[p.~385]{IKLP15},
respectively.
Consequently,
\[
e_0(M) = \prod_{j \in J} e_0(m_j)
\]
with $m_j := \ell_{\sigma_j}$ or with $m_j := k_{\beta_j}$.
\end{proof}

\begin{rem}\label{rem:norm_zero}
Consider the reproducing kernel $\widetilde{L}_\bs$ 
on the domain $\R^\N$, as in \eqref{eq:kernel_full_domain}. If \eqref{g1a} is not satisfied, we still have $H(\widetilde{L}_\bs) \subseteq L^2(\mu)$ due to Lemma~\ref{l5}. However, in this case, integration is trivial, see Lemma~\ref{l5i}.
Similarly, one can show that 
the $L^2$-approximation operator is trivial.
\end{rem}

It is well-known that in the worst-case setting
non-adaptive linear
algorithms of the form \eqref{g5} for $L^2$-approximation
and \eqref{g6} for integration
are optimal within the class of 
all deterministic algorithms 
based on function values and with information cost at most $n$;
for more details see \citet{TrWaWo88}.

\section{Integration}\label{s4}

In what follows, we derive new results for integration and 
$L^2$-approximation on Gaussian and Hermite spaces. This will be done by 
transferring known results from Hermite spaces with the help of the 
isometric isomorphism from Section~\ref{SEC:Isom_Isom} to Gaussian 
spaces and vice versa. Since the actual transference mechanisms 
for integration and $L^2$-approximation 
differ significantly, we discuss both applications separately in 
Sections~\ref{s4} and \ref{s3}, respectively.

Throughout this section, $\bs := (\sigma_j)_{j\in J}$ and
$\bb := (\beta)_{j\in J}$ denote
sequences of shape parameters and base parameters, respectively.
Initially, we do not impose any summability requirements on 
$\bs$ or $\bb$. For the integration problem, an appropriate 
relation between $\sigma_j$ and $\beta_j$, which is
different from \eqref{g2}, will be
determined later, see \eqref{g20}.

\subsection{The Transference Result}\label{subsec:transference_int}

For any positive sequence $\bt \in \R^J$ we define 
$t_\bt \colon \R^J \to \R^J$ by
$t_\bt (\bx) := \bt^{-1} \bx$.

\begin{lemma}\label{l2}
Let $\bt := (\tau_j)_{j \in J}$ denote a positive 
sequence with $\sum_{j \in J} |\tau_j-1| < \infty$. 
The image measure $t_\bt \mu$ of $\mu$
with respect to $t_\bt$ has the density
$\bt_\ast \cdot \phi_\bt^2$
with respect to $\mu$. 
\end{lemma}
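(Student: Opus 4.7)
The plan is to reduce the claim to the one-dimensional case and then exploit the product structure of $\mu$ on $\R^J$. First I would observe that, for each coordinate $j$, a standard change-of-variables computation with the univariate Gaussian density shows that the image of $\mu_0$ under $x \mapsto x/\tau_j$ has density $\tau_j \cdot \exp(-(\tau_j^2 - 1)x^2/2) = \tau_j\,\phi_{\tau_j}^2(x)$ with respect to $\mu_0$, where $\phi_{\tau_j}(x) = \exp(-(\tau_j^2-1)x^2/4)$. In the case $d \in \N$ this already yields the lemma by taking the $d$-fold tensor product.

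For $d = \infty$ the additional work is to make sense of the candidate density globally and then to match both measures. The hypothesis $\sum_j|\tau_j-1| < \infty$ forces $\tau_j \to 1$, so $\bt_\ast = \prod_j \tau_j$ converges to a positive real number; since $\tau_j+1$ is bounded, this also gives $\sum_j|\tau_j^2 - 1| < \infty$. By Fubini and $\int x^2\, d\mu_0(x) = 1$, the series $\sum_j |\tau_j^2 - 1| x_j^2$ converges $\mu$-almost surely, so $\bx \in \ell^2(\bo)$ holds $\mu$-a.s.; hence $\phi_\bt(\bx) > 0$ for $\mu$-almost every $\bx$ and $\bt_\ast \phi_\bt^2$ is $\mu$-measurable and finite almost everywhere.

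To finish, I would test both measures against a cylindrical set $A := A_1 \times \dots \times A_n \times \R \times \R \times \dots$ with Borel $A_i \subseteq \R$. The definition of $t_\bt$ together with the product structure of $\mu$ gives
\[
(t_\bt\mu)(A) = \mu\bigl(\{\bx : x_j/\tau_j \in A_j,\ j=1,\dots,n\}\bigr) = \prod_{j=1}^n \mu_0(\tau_j A_j),
\]
while Fubini together with the one-dimensional density and the normalization $\int \exp(-(\tau_j^2-1) x^2/2)\, d\mu_0(x) = \tau_j^{-1}$ yields
\[
\int_A \bt_\ast \phi_\bt^2 \, d\mu = \bt_\ast \cdot \prod_{j=1}^n \tau_j^{-1}\, \mu_0(\tau_j A_j) \cdot \prod_{j>n}\tau_j^{-1}.
\]
The prefactor collapses to $1$ via $\prod_j\tau_j^{-1} = \bt_\ast^{-1}$, so the two expressions coincide. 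Since the cylindrical sets form a $\cap$-stable generator of the Borel $\sigma$-algebra on $\R^\N$, the uniqueness part of the Carathéodory extension theorem forces the two finite measures $t_\bt\mu$ and $\bt_\ast \phi_\bt^2 \mu$ to agree globally.

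The only real subtlety, rather than a genuine obstacle, lies in the $d=\infty$ case: ensuring the infinite products converge to positive limits, that $\phi_\bt^2$ is $\mu$-a.s.\ well-defined, and that Fubini may be invoked. All of this is controlled by the single summability assumption on $|\tau_j-1|$.
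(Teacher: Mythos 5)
Your proof is correct and follows essentially the same route as the paper: verify the identity on cylinder sets via the univariate change of variables $\mu_0(\tau_j A_j)=\tau_j\int_{A_j}\exp(-(\tau_j^2-1)x^2/2)\,d\mu_0(x)$ and conclude by uniqueness of measures on a $\cap$-stable generator. The only place the paper is more explicit is in justifying the ``Fubini'' step for the infinite product when $d=\infty$: it splits off the indices $J_0:=\{j:\tau_j<1\}$, bounds $\int\phi_\bt^2\,d\mu$ by $\prod_{j\in J_0}\tau_j^{-1}$ via monotone convergence, and then passes to the limit over finitely many coordinates by dominated convergence --- exactly the subtlety you flag at the end, and it is indeed controlled by the summability assumption.
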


\begin{proof}
To establish the statement of the lemma, it suffices to show for an 
arbitrary cylinder set $A \subseteq \R^J$ that 
$t_\bt \mu(A) = \int_A \bt_\ast \cdot \phi_\bt^2\, d\mu$. 

In the case $d < \infty$ any cylinder set is of the form 
$A := A_1\times\dots\times A_d$ 
with measurable sets $A_j \subseteq \R$. Since
\begin{align*}
\mu_0(\tau_j A_j) &= 
(2\pi)^{-1/2} \cdot \int_{\tau_j A_j} \exp(-x_j^2/2)\, d x_j \\
&=
\tau_j \cdot \int_{A_j}
\exp\biggl(-\frac{\tau_j^2-1}{2} \cdot x_j^2 \biggr) \, d\mu_0 (x_j), 
\end{align*}
we obtain
\[
t_\bt\mu(A) = \mu \left( t^{-1}_\bt (A) \right) = 
\prod_{j=1}^{d} \mu_0 ( \tau_j A_j)
= \bt_\ast \cdot \int_{A} \phi_\bt^2(\bx) \, d\mu(\bx),
\]
which in turn yields the claim.

In the case $d = \infty$ 
we put $J_0 := \{ j \in \N \colon \tau_j < 1\}$.
Using the monotone convergence theorem we obtain
\begin{equation}\label{ineq:mono_conv_theo}
\int_{\R^\N} \phi_\bt^2 (\bx) \, d \mu(\bx)
\leq
\int_{\R^\N} 
\exp \Biggl( - \sum_{j \in J_0} \frac{\tau_j^2-1}{2} \cdot x_j^2 \Biggr) 
\, d \mu(\bx)
= \prod_{j \in J_0} \tau_j^{-1} < \infty.
\end{equation}
For $d=\infty$ 
any cylinder set is of the form $A := A_1\times\dots$
with measurable sets $A_j \subseteq \R$, where $A_j = \R$ for $j >
j_0$. For every $j_1 > j_0$ we obtain
\begin{align*}
t_\bt\mu(A) &= 
\prod_{j=1}^{j_1} \mu_0 ( \tau_j A_j)
= \prod_{j=1}^{j_1} \biggl( \tau_j \cdot \int_{A_j} 
\exp\biggl(-\frac{\tau_j^2-1}{2} \cdot x_j^2 \biggr)
\, d\mu_0(x_j) \biggr) \\
&= \prod_{j=1}^{j_1} \tau_j \cdot \int_{A} 
\exp\Biggl(-\sum_{j=1}^{j_1} \frac{\tau_j^2-1}{2} \cdot x_j^2 
\Biggr) \, d\mu(\bx).
\end{align*}
Clearly, we have
\[
\exp\Biggl(-\sum_{j=1}^{j_1} \frac{\tau_j^2-1}{2} \cdot x_j^2 
\Biggr) 
\le \exp 
\Biggl( - \sum_{j \in J_0} \frac{\tau_j^2-1}{2} \cdot x_j^2 \Biggr)  
\]
for all $\bx\in \R^\N$. Thus, due to \eqref{ineq:mono_conv_theo}, 
the dominated convergence theorem yields
\[
t_\bt\mu(A) = \bt_\ast \cdot \int_A \phi_\bt^2(\bx) \, d\mu(\bx),
\]
which in turn establishes the claim. 
\end{proof}

\begin{lemma}\label{l3}
Let $\bc := (c_j)_{j \in J}$ and 
$\bt := (\tau_j)_{j \in J}$ denote positive 
sequences with $\sum_{j \in J} |c_j-1| < \infty$ and
$\sum_{j \in J} |\tau_j-1| < \infty$. 
If
\begin{equation}\label{g9b}
\tau_j^2 = \frac{c_j^2+1}{2} 
\end{equation}
for every $j \in J$ then we have
\begin{equation}\label{g9a}
I(Q_\bc f \circ t_\bt) = \frac{\bt_\ast}{\bc_\ast^{1/2}} \cdot I(f) 
\end{equation}
for every $f \in L^1(\mu)$. 
\end{lemma}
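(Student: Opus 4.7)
My plan is to compute $I(Q_\bc f \circ t_\bt)$ via two successive changes of variables, each provided by Lemma~\ref{l2}. The standard change-of-variables formula combined with Lemma~\ref{l2} applied to $\bt$ gives
\begin{equation*}
I(Q_\bc f \circ t_\bt) = \int Q_\bc f \, d(t_\bt\mu) = \bt_\ast \int Q_\bc f \cdot \phi_\bt^2 \, d\mu.
\end{equation*}
Unfolding $Q_\bc f(\by) = \bc_\ast^{1/2} \phi_\bc(\by) f(\bc\by)$ and using hypothesis \eqref{g9b} in the form $(\tau_j^2 - 1)/2 = (c_j^2 - 1)/4$, one checks that $\phi_\bt^2 = \phi_\bc$ pointwise, hence $\phi_\bc \cdot \phi_\bt^2 = \phi_\bc^2$, so that
\begin{equation*}
I(Q_\bc f \circ t_\bt) = \bc_\ast^{1/2} \bt_\ast \int \phi_\bc^2(\by) f(\bc\by) \, d\mu(\by).
\end{equation*}

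To handle the remaining integral I would apply Lemma~\ref{l2} a second time, now with $\bc^{-1}$ in place of $\bt$; the required summability $\sum_{j \in J} |c_j^{-1} - 1| < \infty$ follows from $\sum_{j \in J} |c_j - 1| < \infty$ because $c_j \to 1$ forces $c_j$ to be eventually bounded away from $0$. Since $t_{\bc^{-1}}(\bx) = \bc\bx$, defining $g(\by) := \widetilde\phi(\by) f(\by)$ with $\widetilde\phi(\by) := \phi_\bc^2(\bc^{-1}\by)$ gives $g(\bc\bx) = \phi_\bc^2(\bx) f(\bc\bx)$, and Lemma~\ref{l2} yields
\begin{equation*}
\int \phi_\bc^2(\by) f(\bc\by) \, d\mu(\by) = \int g \circ t_{\bc^{-1}} \, d\mu = \bc_\ast^{-1} \int \widetilde\phi \cdot f \cdot \phi_{\bc^{-1}}^2 \, d\mu.
\end{equation*}
A short calculation on the exponents then shows $\widetilde\phi(\by) \cdot \phi_{\bc^{-1}}^2(\by) \equiv 1$ on the relevant subspace, because the two exponents $-\sum_j (1-c_j^{-2}) y_j^2/2$ and $-\sum_j (c_j^{-2}-1) y_j^2/2$ are negatives of one another; the integral therefore reduces to $\bc_\ast^{-1} I(f)$, and combining constants gives exactly $\bt_\ast \bc_\ast^{-1/2} I(f)$.

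The main obstacle is the bookkeeping in the infinite-variate case: one has to verify that $\phi_\bc, \phi_\bt, \widetilde\phi, \phi_{\bc^{-1}}$ are simultaneously defined and strictly positive on a common subspace of full $\mu$-measure, so that the pointwise identities $\phi_\bc \phi_\bt^2 = \phi_\bc^2$ and $\widetilde\phi \phi_{\bc^{-1}}^2 = 1$ hold $\mu$-almost everywhere, and that the integrability hypotheses needed to invoke Lemma~\ref{l2} twice are met (both for $Q_\bc f \cdot \phi_\bt^2$ and for $g \circ t_{\bc^{-1}}$). The assumption $f \in L^1(\mu)$ and the summability of $|c_j - 1|$ and $|\tau_j - 1|$ make this routine but tedious; once settled, the proof is essentially algebra on Gaussian exponents.
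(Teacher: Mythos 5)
Your proof is correct and rests on exactly the same ingredients as the paper's: Lemma~\ref{l2} for the image-measure density plus a cancellation of Gaussian exponents that is equivalent to \eqref{g9b}. The only difference is organizational — you apply Lemma~\ref{l2} twice (first with $\bt$, using \eqref{g9b} to get $\phi_\bt^2=\phi_\bc$, then with $\bc^{-1}$), whereas the paper rewrites $Q_\bc f\circ t_\bt$ as a single composition with $t_{\bc^{-1}\bt}$ and applies Lemma~\ref{l2} once, with \eqref{g9b} entering only in the final cancellation $(\phi_\bc\circ t_\bc)\cdot\phi^2_{\bc^{-1}\bt}\equiv 1$ $\mu$-a.e.
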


\begin{proof}
By definition,
\[
Q_\bc f \circ t_\bt  = 
\bc_\ast^{1/2} \cdot \left(\phi_\bc \circ t_\bt \right) \cdot
\left(f \circ t_{\bc^{-1} \bt} \right) =
\left( \bc_\ast^{1/2} \cdot \left(\phi_\bc \circ t_\bc \right) 
\cdot f \right)
\circ t_{\bc^{-1} \bt}. 
\]
Note that $\sum_{j \in J} \omega_j < \infty$ for
$\omega_j := |c_j^{-1} \tau_j-1|$.
Lemma~\ref{l2} implies
\[
I(Q_\bc f \circ t_\bt) =
\frac{\bt_\ast}{\bc_\ast^{1/2}} \cdot \int_{\R^J}
\left(\phi_\bc \circ t_\bc\right) \cdot \phi^2_{\bc^{-1}\bt} \cdot 
f \, d \mu 
\]
for every $f \colon \R^J \to \R$ such that 
$\left(\phi_\bc \circ t_\bc\right) \cdot \phi^2_{\bc^{-1}\bt} \cdot f \in
L^1(\mu)$. 

Lemma~\ref{l4} yields $\mu(\ell^2(\bo)) = 1$.
Let $\bx \in \ell^2(\bo)$. We have $\phi_{\bc^{-1} \bt}(\bx) >
0$ and, since $\omega_j \asymp |c_j -1|$, we also have
$\phi_\bc(t_\bc(\bx)) > 0$.
Since 
\[
- \ln \left(
\phi_\bc (t_\bc (\bx)) \cdot \phi^2_{\bc^{-1}\bt}(\bx)\right) 
=
\sum_{j \in J}
\frac{x_j^2}{4c_j^2} \cdot \left( c_j^2-1 + 2(\tau_j^2-c_j^2) \right),
\]
we conclude that \eqref{g9a} is satisfied for every $f \in
L^1(\mu)$ if $c_j^2-1 = 2(c_j^2 - \tau_j^2)$ for every $j \in J$.
The latter is equivalent to \eqref{g9b} for every $j \in J$.
\end{proof}

\begin{theo}\label{t3}
Assume that \eqref{g1a} is satisfied.
Moreover, assume that
\begin{align}
1-\beta_j &= \frac{1}{1+2\sigma_j^2}, \label{g20}\\
c_j &= (1+4\sigma_j^2)^{1/2}, \label{g21}\\
\tau_j &= (1+2\sigma_j^2)^{1/2} \label{g22}
\end{align}
are satisfied for every $j \in J$.
For every quadrature formula $A$ with nodes from $\ell^2(\bs^2)$
and
\[
B(f) := \frac{\bc_\ast^{1/2}}{\bt_\ast} \cdot 
A (Q_\bc f \circ t_\bt)
\]
we have
\[
e(A,L_{\bs}) = (\be+4\bs^2)_\ast^{-1/4} \cdot e(B, K_\bb).
\]
\end{theo}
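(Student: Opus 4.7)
The plan is to build a linear bijection $T \colon H(K_\bb) \to H(L_\bs)$, namely $Tf := Q_\bc f \circ t_\bt$, enjoying three properties: (i) $\|Tf\|_{L_\bs} = \bt_\ast \cdot \|f\|_{K_\bb}$; (ii) $I(Tf) = (\bt_\ast / \bc_\ast^{1/2}) \cdot I(f)$; and (iii) $A(Tf) = (\bt_\ast / \bc_\ast^{1/2}) \cdot B(f)$, the last being built into the definition of $B$. Using that $T$ is a bijection, substitute $g = Tf$ into the supremum defining $e(A, L_\bs)$: the constraint $\|g\|_{L_\bs} \leq 1$ becomes $\|f\|_{K_\bb} \leq \bt_\ast^{-1}$ by (i), and (ii)--(iii) turn $|I(g) - A(g)|$ into $(\bt_\ast/\bc_\ast^{1/2}) \cdot |I(f) - B(f)|$. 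Homogeneity of the sup then yields $e(A, L_\bs) = (\bt_\ast/\bc_\ast^{1/2}) \cdot \bt_\ast^{-1} \cdot e(B, K_\bb) = \bc_\ast^{-1/2} \cdot e(B, K_\bb) = (\be+4\bs^2)_\ast^{-1/4} \cdot e(B, K_\bb)$.

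The heart of the proof is (i), which I would obtain by applying Theorem~\ref{t1} with the \emph{rescaled} shape sequence $\bs' := \bs\bt$ in place of $\bs$. Using \eqref{g21} and \eqref{g22}, a direct computation gives $1 + 8(\sigma_j')^2 = (1 + 4\sigma_j^2)^2 = c_j^4$, so the relation \eqref{g3} of Theorem~\ref{t1} for $\bs'$, namely $c_j = (1+8(\sigma_j')^2)^{1/4}$, holds with our $\bc$. Plugging $\sigma_j'$ into \eqref{g2} produces $1 - \beta_j = 2/(2+4\sigma_j^2) = 1/(1+2\sigma_j^2)$, matching \eqref{g20}, so the base sequence Theorem~\ref{t1} attaches to $\bs'$ is precisely our $\bb$. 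The summability \eqref{g1a} for $\bs'$ follows from that for $\bs$ since $(\sigma_j)$ is then bounded. Theorem~\ref{t1} consequently provides an isometric isomorphism $Q_\bc \colon H((\be-\bb)_\ast K_\bb) \to H(L_{\bs\bt})$.

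To pass from $H(L_{\bs\bt})$ back to $H(L_\bs)$, I would invoke the standard RKHS fact that if $\phi \colon X \to Y$ is a bijection and $K$ a reproducing kernel on $Y$, then $g \mapsto g \circ \phi$ is an isometric isomorphism from $H(K)$ onto $H(K \circ (\phi, \phi))$. Applied to $\phi := t_\bt \colon \ell^2(\bs^2) \to \ell^2((\bs\bt)^2)$ and $K := L_{\bs\bt}$, together with the computation
\[
L_{\bs\bt}\bigl(t_\bt(\bx), t_\bt(\by)\bigr) = \prod_{j \in J} \exp\bigl(-(\sigma_j \tau_j)^2 \cdot (\tau_j^{-1} x_j - \tau_j^{-1} y_j)^2\bigr) = L_\bs(\bx, \by),
\]
this makes $g \mapsto g \circ t_\bt$ an isometric isomorphism $H(L_{\bs\bt}) \to H(L_\bs)$. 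Composing with $Q_\bc$ defines $T$, and noting $(\be-\bb)_\ast = \prod_{j \in J} (1+2\sigma_j^2)^{-1} = \bt_\ast^{-2}$ yields (i) with the constant $\bt_\ast$.

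Finally, (ii) follows from Lemma~\ref{l3} once its hypothesis \eqref{g9b} is verified, which reads $(c_j^2+1)/2 = 1+2\sigma_j^2 = \tau_j^2$ and is immediate from \eqref{g21} and \eqref{g22}. The main technical observation is the identification in step (i) that $(\bc, \bb)$ satisfy the Theorem~\ref{t1} relations for the auxiliary shape sequence $\bs\bt$ rather than $\bs$ itself, and that the subsequent pullback by $t_\bt$ rescales the resulting Gaussian kernel back from $L_{\bs\bt}$ to $L_\bs$. Once this observation is in place, the remainder of the argument is elementary bookkeeping.
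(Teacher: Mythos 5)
Your proposal is correct and follows essentially the same route as the paper: you pull back along $t_\bt$ using $L_{\bs\bt}(t_\bt\bx,t_\bt\by)=L_\bs(\bx,\by)$, apply Theorem~\ref{t1} to the rescaled sequence $\bs\bt$ (verifying that \eqref{g2}--\eqref{g3} for $\bs\bt$ reduce to \eqref{g20}--\eqref{g21} via $1+8(\tau_j\sigma_j)^2=(1+4\sigma_j^2)^2$), and invoke Lemma~\ref{l3} through the relation $\tau_j^2=(c_j^2+1)/2$, exactly as in the paper's proof. The only cosmetic difference is that you package the two isometries into a single map $T$ and track the constants via $(\be-\bb)_\ast=\bt_\ast^{-2}$ rather than simplifying $(1-\beta_j)^{1/2}\tau_j/c_j^{1/2}$ at the end.
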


\begin{proof}
At first, we only assume that \eqref{g1a} is satisfied.
We alter the shape parameters of the Gaussian kernel
$L_\bs$. To this end, let $\bt := (\tau_j)_{j \in J}$ denote 
any positive sequence with $\sum_{j \in J} |\tau_j-1| < \infty$. 
Obviously, $\sum_{j \in J} (\tau_j \sigma_j)^2 < \infty$.
Recall that $t_\bt$ defines a bijection on $\ell^2(\bs^2)$.
Since 
\[
L_{\bt \bs}(\bx,\by) 
= \prod_{j \in J} \ell_{\tau_j \sigma_j}(x_j,y_j) 
= \prod_{j \in J} \ell_{\sigma_j}(\tau_j x_j,\tau_j y_j) 
= L_{\bs} (\bt \bx, \bt \by)
\]
for all $\bx,\by \in \ell^2(\bs^2)$, the spaces 
$H(L_{\bt \bs})$ and $H(L_{\bs})$ are isometrically isomorphic via 
$f \mapsto f \circ t_\bt$.
Consequently,
\[
e(A,L_\bs) = 
\sup_{\|f\|_{H(L_{\bt \bs})} \leq 1} 
|I(f \circ t_\bt) - A (f \circ t_\bt)|.
\]
	
Next, we apply Theorem~\ref{t1} with $\bt \bs$
instead of $\bs$, which leads to
\begin{equation}\label{g7}
1 - \beta_j = \frac{2}{1+ \left(1+8 (\tau_j \sigma_j)^2 \right)^{1/2}}
\end{equation}
and
\begin{equation}\label{g8}
c_j = \left(1+8 (\tau_j \sigma_j)^2\right)^{1/4},
\end{equation}
instead of \eqref{g2} and \eqref{g3}.
Assume that \eqref{g7} and \eqref{g8} are satisfied for every $j
\in J$.
Since $\sum_{j \in J} (\tau_j \sigma_j)^2 < \infty$, we obtain 
$\sum_{j \in J} \beta_j < \infty$ by Remark \ref{r4}, which in 
turn ensures that $K_\bb$ is well-defined.
 Let $Q := Q_\bc$. Then
$Q_{|H(K_\bb)}$ is an isomorphism from
$H(K_\bb)$ to $H(L_{\bt \bs})$ with
\[
\Vert Q f\|^2_{H(L_{\bt \bs})} =
\Vert f\|^2_{H((\be-\bb)_\ast K_\bb)} = (\be-\bb)_\ast^{-1} \cdot
\Vert f\|^2_{H(K_\bb)}
\]
for every $f \in H(K_\bb)$. We conclude that
\[
e(A,L_\bs) =
(1-\bb)_\ast^{1/2} \cdot \sup_{\|f\|_{H(K_\bb)} \leq 1} 
|I(Q f \circ t_\bt) - A (Q f \circ t_\bt)|.
\]

Finally, we apply Lemma~\ref{l3}, noting that
the unique solution of \eqref{g9b} and \eqref{g8} with 
$\tau_j > 0$ is given by \eqref{g21} and \eqref{g22}.
In the sequel, we assume that 
\eqref{g20}, \eqref{g21}, and \eqref{g22} are satisfied,
in addition to \eqref{g1a}.
We use 
\[
(\tau_j \sigma_j)^2 = (1+2\sigma_j^2) \sigma_j^2
\]
to obtain
\[
1+ 8 (\tau_j\sigma_j)^2 = (1+4\sigma_j^2)^2.
\]
It follows that \eqref{g7} is satisfied, too,
and obviously, $\sum_{j \in J} |\tau_j-1| < \infty$ as well as 
$\sum_{j \in J} |c_j-1| < \infty$. Lemma~\ref{l3} yields
\[
e(A,L_\bs)
=
(1-\bb)_\ast^{1/2} \cdot \frac{\bt_\ast}{\bc_\ast^{1/2}} \cdot
\sup_{\|f\|_{H(K_\bb)} \leq 1} 
\Biggl|I(f) - \frac{\bc_\ast^{1/2}}{\bt_\ast} \cdot A
(Q f \circ t_\bt)\Biggr|.
\]

We use
\[
\frac{\tau_j}{c_j^{1/2}} = 
\frac{\left(1+2\sigma_j^2\right)^{1/2}}{\left(1+4\sigma_j^2\right)^{1/4}},
\]
to obtain
\[
(1-\beta_j)^{1/2} \frac{\tau_j}{c_j^{1/2}} = 
\frac{1}{(1+4\sigma_j^2)^{1/4}}
\]
and therefore
\[
(1-\bb)_\ast^{1/2} \cdot \frac{\bt_\ast}{\bc_\ast^{1/2}} =
(\be+4\bs^2)_\ast^{-1/4}.
\qedhere
\]
\end{proof}

\begin{rem}\label{r7}
Let the assumptions from Theorem~\ref{t3} be satisfied.
For a quadrature formula $A$ on the Gaussian space $H(L_\bs)$
according to \eqref{g6} with nodes from $\ell^2(\bs^2)$
the quadrature formula $B$ on the Hermite space $H(K_\bb)$ according 
to Theorem \ref{t3} is easily determined explicitly. 
Let
\[
e_j := \Bigl(\frac{1+4\sigma_{j}^2}{1+2\sigma_j^2}\Bigr)^{1/2}
\]
for every $j\in J$.
Then, for $f\colon \ell^2(\bs^2)\to\R$ we have
\[
B f = \bee_\ast \cdot \sum_{i=1}^{n} f(\by_i)\cdot b_i,
\]
with $\by_i\in\ell^2(\bs^2)$ and $b_i\in\R$ given by
\[
\by_i := \bee \bx_i
\qquad \text{and}\qquad
b_i := \phi_{\bc}(\bt^{-1} \bx_i) \cdot a_i.
\]

It is easily verified that $(\bx_i,a_i)\mapsto(\by_i,b_i)$ defines a
bijection on the set $\ell^2(\bs^2)\times \R$ of pairs of
nodes and coefficients. Furthermore,
\[
\cost(A) = \cost(B).
\]
Indeed, this equation is trivially satisfied with $\cost(A)=n$
for $d \in \N$, and for $d=\infty$ it follows from
$\Act (\bx_i) = \Act(\by_i)$, which, in turn, is a consequence
of $e_j \neq 0$ for all $j \in J$.
\end{rem}

\begin{cor}\label{c2}
Assume that \eqref{g1a} and \eqref{g20} are satisfied.
For every $n \in \N$ we have
\[
\frac{e_n(L_{\bs})}{e_0(L_\bs)} = \frac{e_n(K_{\bb})}{e_0(K_\bb)} = 
e_n(K_\bb).
\]
i.e., the normalized $n$-th minimal errors for integration on Gaussian 
spaces and on the corresponding Hermite spaces, related by \eqref{g20}, 
coincide.
\end{cor}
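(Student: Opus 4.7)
The plan is to derive the corollary as a direct consequence of Theorem~\ref{t3} and Remark~\ref{r7}, combined with the explicit formula for $e_0(L_\bs)$ in Lemma~\ref{l7}. First, by Lemma~\ref{l7} we have $e_0(K_\bb) = 1$, so the second equality in the assertion is immediate; thus only the first equality requires work.

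Next, I would recall that by Theorem~\ref{t3}, under the hypotheses \eqref{g1a} and \eqref{g20}, every quadrature formula $A$ with nodes in $\ell^2(\bs^2)$ satisfies
\[
e(A,L_{\bs}) = (\be+4\bs^2)_\ast^{-1/4} \cdot e(B, K_\bb),
\]
where $B$ is constructed from $A$ as in the theorem. By Lemma~\ref{l7} the leading factor equals exactly $e_0(L_\bs)$, so one obtains
\[
e(A,L_{\bs}) = e_0(L_\bs) \cdot e(B, K_\bb).
\]

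Now I would invoke Remark~\ref{r7}, which records two crucial facts: the assignment $A \mapsto B$ is realized by an explicit bijection $(\bx_i,a_i) \mapsto (\by_i,b_i)$ on the set $\ell^2(\bs^2) \times \R$ of node-coefficient pairs, and this bijection preserves cost, i.e., $\cost(A) = \cost(B)$. Consequently, the correspondence $A \leftrightarrow B$ restricts to a bijection between the set of $n$-point (or, in the infinite-dimensional case, cost-$\le n$) quadrature formulas on $H(L_\bs)$ and the analogous set on $H(K_\bb)$. Taking the infimum of both sides of the displayed identity over all admissible $A$ with $\cost(A) \le n$ then yields
\[
e_n(L_\bs) = e_0(L_\bs) \cdot e_n(K_\bb),
\]
which, after division by $e_0(L_\bs)$ and using $e_0(K_\bb)=1$, is the claim.

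The only point requiring care is that the infimum is genuinely transferred by the bijection; this is why Remark~\ref{r7} is needed in addition to Theorem~\ref{t3}, since one must know that every quadrature on $H(K_\bb)$ arises as some $B$ (surjectivity) and that the cost matches exactly (both in the finite-dimensional case, where $\cost = n$ trivially, and in the infinite-dimensional case, where it relies on $\Act(\bx_i)=\Act(\by_i)$). With these two properties from Remark~\ref{r7} in hand, no further computation is needed, and the proof is essentially a two-line deduction.
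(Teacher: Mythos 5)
Your proof is correct and follows exactly the paper's own argument: combine Theorem~\ref{t3} with the cost-preserving bijection from Remark~\ref{r7} to get $e_n(L_\bs) = (\be+4\bs^2)_\ast^{-1/4}\cdot e_n(K_\bb)$, then identify the constant with $e_0(L_\bs)$ and use $e_0(K_\bb)=1$ via Lemma~\ref{l7}. Your explicit attention to surjectivity and cost matching is exactly the role Remark~\ref{r7} plays in the paper's proof.
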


\begin{proof}
We combine Theorem~\ref{t3} and Remark~\ref{r7} to obtain
\[
e_n(L_{\bs}) = (\be+4\bs^2)_\ast^{-1/4} \cdot e_n(K_\bb)
\]
for every $n\in \N_0$. 
Use Lemma \ref{l7} to establish the desired result.
\end{proof}

\subsection{Functions of a Single Variable}\label{s4.2}

Let us mention prior work concerning integration on $H(\ell_\sigma)$
and on $H(k_\beta)$, where explicit upper bounds for the worst-case 
error $e(A,M)$ of particular quadrature formulas $A$ and explicit 
lower bounds for $e_n(M)$ are obtained. Of course, these upper bounds
immediately yield upper bounds for minimal errors.
Gauss-Hermite rules are analyzed in \citet{IKLP15} on Hermite spaces and 
in \citet{KW2012} as well as \citet{KSW2017}
on Gaussian spaces. Moreover, scaled Gauss-Hermite rules are 
studied in \citet{KOG21} on Gaussian spaces.
Lower bounds for the $n$-minimal
errors have been established in \citet{IKLP15} for Hermite
spaces and in \citet{KSW2017} for Gaussian spaces.

In the following we combine these results in the best possible way
with the transference result established in Theorem~\ref{t3} and 
Corollary~\ref{c2}.

\begin{theo}\label{t4}
Let
\begin{align*}
C_1(\sigma) &:= 2^{-1} \cdot (1+4\sigma^2)^{-1/4},\\
C_2(\sigma) &:= \pi^{-1/4} \cdot (1+2\sigma^2)^{-1/2},\\
C(\beta) &:= \pi^{-1/4} \cdot (1-\beta^2)^{1/4}.
\end{align*}
For every shape parameter $\sigma > 0$ and every $n \in \N$
the $n$-th minimal error $e_n(\ell_\sigma)$
for integration on the Gaussian space $H(\ell_\sigma)$ satisfies
\[
C_1(\sigma) \cdot
\left( \frac{\sigma^2}{1+2\sigma^2} \right)^{2n} \cdot (n+1)^{-2}
\leq e_n (\ell_\sigma)
\leq
C_2(\sigma) \cdot
\left( \frac{2\sigma^2}{1+2\sigma^2} \right)^{n} \cdot n^{-1/4}.
\]
For every base parameter $0 < \beta < 1$ and every $n \in \N$ 
the $n$-th minimal error $e_n(k_\beta)$
for integration on the Hermite space $H(k_\beta)$ satisfies
\[
\frac{1}{2} \cdot \left(\frac{\beta}{2}\right)^{2n} \cdot (n+1)^{-2} 
\leq e_n(k_\beta) \leq
C(\beta) \cdot \beta^n \cdot n^{-1/4}.
\]
\end{theo}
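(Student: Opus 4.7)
My plan is to treat the four bounds of the theorem as three essentially independent pieces: the two Hermite bounds, the Gaussian upper bound, and the Gaussian lower bound. The first three should be read off the existing literature on univariate integration; only the Gaussian lower bound requires genuinely new input, and it will be derived from its Hermite counterpart via the transference result of Corollary~\ref{c2}.

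For $e_n(k_\beta)$, both the upper bound (via Gauss--Hermite quadrature) and the lower bound are established in \citet{IKLP15}, and the task reduces to verifying that the leading constants coming out of that analysis are exactly $C(\beta)$ and $1/2$. For the upper bound on $e_n(\ell_\sigma)$, I would invoke the explicit error analyses of (scaled) Gauss--Hermite rules on Gaussian spaces in \citet{KW2012}, \citet{KSW2017}, and \citet{KOG21}, again checking that the leading constant simplifies to $C_2(\sigma)$.

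The one new piece is the lower bound on $e_n(\ell_\sigma)$. Setting $\beta := 2\sigma^2/(1+2\sigma^2)$ makes the correspondence \eqref{g20} hold, so that by Corollary~\ref{c2} and Lemma~\ref{l7}
\[
e_n(\ell_\sigma) = (1+4\sigma^2)^{-1/4} \cdot e_n(k_\beta).
\]
Substituting the Hermite lower bound and using $\beta/2 = \sigma^2/(1+2\sigma^2)$ produces the Gaussian lower bound with prefactor $\tfrac12 (1+4\sigma^2)^{-1/4} = C_1(\sigma)$, which is exactly what is claimed.

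The main point worth flagging is why the same transfer does \emph{not} yield the Gaussian upper bound with the sharp constant: a direct computation using $1-\beta^2 = 2/(1+2\sigma^2)$ converts the Hermite upper bound into one with prefactor $\pi^{-1/4} \cdot 2^{1/4}(1+4\sigma^2)^{-1/4}(1+2\sigma^2)^{-1/4}$, which strictly exceeds $C_2(\sigma) = \pi^{-1/4}(1+2\sigma^2)^{-1/2}$ since $2(1+2\sigma^2) > 1+4\sigma^2$. Hence the sharper $C_2(\sigma)$ forces us to retain the direct Gauss--Hermite analysis on $H(\ell_\sigma)$ as an independent input. Apart from this constant-matching bookkeeping, which is the only real obstacle, no step involves genuine difficulty.
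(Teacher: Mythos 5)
Three of your four bounds are handled exactly as in the paper: the Hermite lower bound is quoted from \citet[Thm.~2]{IKLP15}, the Gaussian upper bound from \citet[Thm.~2.5]{KOG21}, and the Gaussian lower bound is obtained by pushing the Hermite lower bound through Corollary~\ref{c2} and Lemma~\ref{l7} with $\beta = 2\sigma^2/(1+2\sigma^2)$, giving the prefactor $\tfrac12(1+4\sigma^2)^{-1/4}=C_1(\sigma)$. That part is fine.

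The gap is in the Hermite \emph{upper} bound, and it stems from an algebra error. Under \eqref{g20} one has $1-\beta = 1/(1+2\sigma^2)$ and $1+\beta = (1+4\sigma^2)/(1+2\sigma^2)$, hence
\[
1-\beta^2 = \frac{1+4\sigma^2}{(1+2\sigma^2)^2},
\]
not $2/(1+2\sigma^2)$ as you write. With the correct identity, $(1+4\sigma^2)^{-1/4}\cdot C(\beta) = \pi^{-1/4}(1+2\sigma^2)^{-1/2} = C_2(\sigma)$ and $\beta^n = \bigl(2\sigma^2/(1+2\sigma^2)\bigr)^n$, so the two upper bounds in the theorem correspond \emph{exactly} under the transference --- there is no loss of constant in either direction. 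Your conclusion that the transfer is lossy and that the Gaussian and Hermite upper bounds must be independent inputs is therefore wrong, and it leads you to the unsupported claim that \citet{IKLP15} already yields the Hermite upper bound with constant exactly $C(\beta)$. It does not: the paper states explicitly that its Hermite upper bound slightly \emph{improves} the one from \citet[Prop.~1]{IKLP15}, and it obtains it precisely by transferring the Gaussian upper bound of \citet{KOG21} back to the Hermite side. So the correct architecture is the paper's: take the Gaussian upper bound and the Hermite lower bound from the literature, and derive the other two bounds by the transference; your plan leaves the Hermite upper bound without a valid source.
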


\begin{proof}
The upper bound for $e_n(\ell_\sigma)$ follows immediately
from \citet[Thm.~2.5]{KOG21}, and the
lower bound for $e_n(k_\beta)$ is established in \citet[Thm.~2]{IKLP15}.

Assume that
\[
1-\beta = \frac{1}{1+2\sigma^2}.
\]
First of all, Corollary~\ref{c2} and Lemma~\ref{l7} yield
\[
e_n(\ell_\sigma) = (1+4\sigma^2)^{-1/4} \cdot e_n(k_\beta).
\]
Moreover, we have
\[
\beta = \frac{2 \sigma^2}{1+2\sigma^2}
\]
and
\[
1- \beta^2 = \frac{1+4\sigma^{2}}{(1+2\sigma^2)^2}.
\]
Consequently,
the upper bound for $e_n(k_\beta)$ follows from the upper bound
for $e_n(\ell_\sigma)$ and 
the lower bound for $e_n(\ell_\sigma)$ follows from the lower bound
for $e_n(k_\beta)$.
\end{proof}

The lower bound for $e_n(\ell_\sigma)$ from Theorem~\ref{t4}
 improves the lower bound from \citet[Thm.~4.1]{KSW2017}, 
which is super-exponentially small in $n$, while
the upper bound for $e_n(k_\beta)$ from Theorem~\ref{t4}
only slightly improves the upper bound from \citet[Prop.~1]{IKLP15}. 
Obviously, the upper and lower bounds from Theorem~\ref{t4} are not 
tight at all.

\subsection{Functions of Finitely Many Variables}

In this section we study the multivariate case $d\in\N$,
where upper bounds for the minimal errors $e_n(M)$ have been 
established in
\citet{KSW2017} and \citet{KOG21} for Gaussian spaces as well as
in \citet{IKLP15} and \citet{IKPW16b}
for Hermite spaces. 
In all three papers, the upper bounds are obtained by 
quadrature formulas
that are full tensor products of suitable univariate
quadrature formulas, which have already been mentioned in
Section~\ref{s4.2}.
For lower bounds and for the study of tractability concepts
we refer to \citet{KSW2017} and \citet{IKLP15}.

\subsubsection{Exponential Convergence}

At first, 
we consider the case of $d\in\N$ being fixed,
and thus \eqref{g1a} and \eqref{g1b} are trivially satisfied for any
choice of parameters $\sigma_j$ and $\beta_j$, respectively.

Let either $M := L_\bs$ or $M:=K_\bb$.
Motivated by Theorem~\ref{t4},
we study whether there exist 
constants $C_{1},C_{2},p >0$ fulfilling
\begin{equation}\label{eq:EXP}
e_n(M)\leq C_{1}\cdot \exp({-C_{2}\cdot n^{p}})
\end{equation}
for all $n\in\N$. In this case, 
we say that exponential convergence is achieved for the integration
problem on $H(M)$ and
\[
p^*(M) := 
\sup\{ p \geq 0 \colon 
\exists\, C_1, C_2 > 0\ \forall\, n \in \N \colon
e_n(M)\leq C_{1}\cdot \exp({-C_{2}\cdot n^{p}}) \}
\]
is the most relevant quantity.

Exponential convergence has been studied by \citet{KSW2017} and
\citet{KOG21} for Gaussian spaces, as well as by \citet{IKLP15} for 
Hermite spaces.
We present a moderate generalization of the result for Gaussian
spaces, together with a proof that employs the transference mechanism.

\begin{theo}\label{theo:exp}
Exponential convergence is achieved for the integration
problem on $H(L_\bs)$ with
\[
p = 1/d = p^*(L_\bs).
\]
\end{theo}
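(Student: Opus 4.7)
The plan is to reduce the claim to the corresponding Hermite-space result via the transference supplied by Corollary \ref{c2}. Since $d \in \N$, condition \eqref{g1a} is automatic; I would define base parameters $\bb := (\beta_1,\dots,\beta_d)$ by $1-\beta_j = 1/(1+2\sigma_j^2)$, so that \eqref{g20} holds. Corollary \ref{c2} then supplies the identity
\[
e_n(L_\bs) = e_0(L_\bs) \cdot e_n(K_\bb)
\]
for every $n \in \N$, in which $e_0(L_\bs)$ is a finite positive constant by Lemma \ref{l7}. Hence the exponential-decay behavior of the two sequences is identical, and it suffices to establish $p^*(K_\bb) = 1/d$.

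For the lower bound $p^*(L_\bs) \geq 1/d$, I would invoke \eqref{g50}, which is established for Hermite kernels in \citet{IKLP15}: there exists $c > 0$ with $e_n(K_\bb) \leq \exp(-c \cdot n^{1/d})$ for all $n \geq 1$. Multiplying by $e_0(L_\bs)$ yields \eqref{eq:EXP} for $H(L_\bs)$ with $p = 1/d$. For the matching bound $p^*(L_\bs) \leq 1/d$, I would invoke \eqref{g52}, again from \citet{IKLP15}, giving $\limsup_{n\to\infty} e_n(K_\bb) \cdot \exp(c \cdot n^p) = \infty$ for every $p > 1/d$ and $c > 0$; dividing the displayed identity by the positive constant $e_0(L_\bs)$ transfers this obstruction verbatim to $e_n(L_\bs)$ and precludes any exponential rate strictly above $1/d$.

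There is no substantive technical obstacle, since the entire content is packaged into Corollary \ref{c2} together with the cited Hermite-space theorems. The role of the transference is precisely to bypass the additional hypotheses ($\max_j \sigma_j < 1/2$ in \citet{KSW2017}, $\sigma_1 = \dots = \sigma_d$ in \citet{KOG21}) under which the Gaussian upper bound had previously been available, thereby yielding \eqref{g50} in full generality for $H(L_\bs)$.
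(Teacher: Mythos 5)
Your proposal is correct and follows essentially the same route as the paper: both reduce to the Hermite-space results of \citet{IKLP15} via the transference identity of Corollary~\ref{c2}, using the upper bound \eqref{g50} and the obstruction \eqref{g52} for $H(K_\bb)$. The only (cosmetic) difference is that the paper explicitly reorders the coordinates so that the base parameters are non-increasing before invoking \citet[Thm.~1.1]{IKLP15}, a step that costs nothing since permuting variables leaves the minimal errors unchanged.
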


\begin{proof}
After a possibly necessary reordering of the univariate
kernels, the shape
parameters satisfy $\sigma_1 \geq \dots \geq \sigma_d$, which implies
$\beta_1 \geq \dots \geq \beta_d$ for the base parameters given by
\eqref{g20}. Thus we may employ 
\citet[Thm.~1.1]{IKLP15} in the specific case 
$\omega := 1/\mathrm{e}$ as well as $a_j := \ln(1/\beta_j)$ and
$b_j := 1$ for all $j\in\N$, which yields exponential
convergence on $H(K_\bb)$ with $p^*(K_\bb) = 1/d$.
Furthermore, \eqref{eq:EXP} is satisfied 
for $M = K_\bb$ in the extremal case $p = 1/d$,
see \citet[Thm.~4]{IKLP15}.

Corollary \ref{c2} implies that \eqref{eq:EXP} holds for 
$M = K_\bb$ if and only if it holds for 
$M = L_\bs$ with $C_{1}$ being replaced by 
$C_{1}\cdot e_0(L_\bs)$, but with $C_{2}$ and $p$ unchanged.
Thus, we obtain the desired result.
\end{proof}

Theorem~\ref{theo:exp} has been established in 
\citet[Thm.~1.1]{KSW2017} under the additional assumption
\[
\max_{1 \leq j \leq d} \sigma_j^2<1/2, 
\]
while the general case has been left open for future research.
In the case 
\[
\sigma_1 = \dots = \sigma_d > 0
\]
the exponential convergence on $H(L_\bs)$ with $p=1/d$ follows directly 
from \citet[Cor.~2.11]{KOG21}. 

Alternatively to the transference mechanism, the following
monotonicity property allows to establish Theorem~\ref{theo:exp} in
full generality and working exclusively with Gaussian spaces.
Suppose that $\bs^{(1)} \leq \bs^{(2)}$,
i.e., $\sigma^{(1)}_j \leq \sigma^{(2)}_j$ for $j=1,\dots,d$.
Using \eqref{g12} we conclude that $H(L_{\bs^{(1)}}) \subseteq
H(L_{\bs^{(2)}})$ with a continuous identical embedding.
Therefore we obtain $p^*(L_\bs) \leq 1/d$ for every $\bs$ from
\citet[Thm.~1.1.(b)]{KSW2017}, and exponential convergence with 
$p=1/d$ for every $\bs$ from \citet[Cor.~2.11]{KOG21}.

\subsubsection{Exponential Convergence Weak Tractability}

Exponential convergence has been studied alongside uniform 
exponential convergence and several notions of tractability in
\citet{IKLP15}, \citet{IKPW16b}, and \citet{KSW2017}. In this paper, 
the setting is specified by arbitrary sequences
$\bs:=(\sigma_j)_{j\in\N}$ and $\bb := (\beta_j)_{j\in\N}$ 
of shape parameters $\sigma_j > 0$ and base parameters
$0 < \beta_j < 1$. For every $d\in\N$ we put
$\bs_d := (\sigma_1,\dots,\sigma_d)$ and 
$\bb_d := (\beta_1,\dots,\beta_d)$, and we consider
either $M_d := L_{\bs_d}$ for all $d\in\N$ or $M_d := K_{\bb_d}$ 
for all $d\in\N$.

By definition, we have uniform exponential convergence
if there exists a constant $p>0$ with the following property: For every
$d \in \N$ there exist $C_1,C_2>0$ such that
\[
e_n(M_d)\leq C_{1}\cdot \exp({-C_{2}\cdot n^{p}})
\]
for all $n \in \N$. For all sequences $\bs$, Theorem~\ref{theo:exp} 
excludes uniform exponential convergence, and the lack of this property
excludes exponential convergence polynomial 
tractability, see \citet[p.~832]{KSW2017}.  We are thus led 
to the study of a weaker tractability concept, 
which is discussed in the following.

For $0<\eps<1$
the information complexity for the absolute and the normalized
error criterion are defined by
\[
\nabs(\eps,M_d) := 
\inf\{n\in\N\colon e_n(M_d)\leq\eps\}
\]
and
\[
\nnorm(\eps,M_d) := \inf\{n\in\N\colon
e_n(M_d)/e_0(M_d)\leq\eps\},
\]
respectively.
Lemma~\ref{l7} yields
$\nabs(\eps,M_d)\leq\nnorm(\eps,M_d)$, 
with equality if $M_d$ is a Hermite kernel.

Let $t\geq 1$ and $\kappa > 0$. By definition, exponential convergence
$(t,\kappa)$-weak tractability, for short EC-$(t,\kappa)$-WT,
holds for $(M_d)_{d \in \N}$ and the normalized error criterion, if
\[
\lim_{d+\eps^{-1}\to\infty}
\frac{\ln(\nnorm(\eps,M_d))}{d^t+(\ln(\eps^{-1}))^\kappa} = 0.
\]
For Hermite spaces, the case $t=\kappa=1$ is studied
in \citet{IKLP15} as well as \citet{IKPW16b},
while for Gaussian spaces, the case $t\geq 1$ and $\kappa\geq 1$
is studied in \citet{KSW2017}.

First, we give a necessary condition and a sufficient condition for 
EC-$(1,1)$-WT in the case of Gaussian spaces and the
normalized error criterion. 
For non-increasing shape parameters the two conditions coincide
and hence we even obtain a characterization
of this tractability property.

\begin{theo}\label{theo:(1,1)-WT}
Let $\bs$ be a sequence of shape parameters. 
If EC-$(1,1)$-WT holds for
$(L_{\bs_d})_{d\in\N}$ and the normalized error criterion, then we have 
\[
\inf_{j \in \N} \sigma_{j}=0.
\]
If $\bs$ is non-increasing and 
\[
\lim_{j\to\infty}\sigma_j=0,
\]
then EC-$(1,1)$-WT holds for
$(L_{\bs_d})_{d\in\N}$ and the normalized error criterion.

\end{theo}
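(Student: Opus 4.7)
The plan is to transfer the problem to Hermite kernels via Corollary~\ref{c2}. Define $\bb := (\beta_j)_{j \in \N}$ by the relation \eqref{g20}, so that $\beta_j = 2\sigma_j^2/(1 + 2\sigma_j^2)$. This formula defines a strictly increasing bijection between positive shape parameters and base parameters in $(0,1)$, so $\inf_j \sigma_j = 0$ is equivalent to $\inf_j \beta_j = 0$, $\lim_j \sigma_j = 0$ is equivalent to $\lim_j \beta_j = 0$, and $\bs$ is non-increasing iff $\bb$ is non-increasing. Combining Corollary~\ref{c2} with Lemma~\ref{l7} (which gives $e_0(K_{\bb_d}) = 1$) yields $\nnorm(\eps, L_{\bs_d}) = \nnorm(\eps, K_{\bb_d}) = \nabs(\eps, K_{\bb_d})$ for every $d$ and $\eps$. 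Hence EC-$(1,1)$-WT for $(L_{\bs_d})_{d \in \N}$ in the normalized criterion reduces to the same tractability property for $(K_{\bb_d})_{d \in \N}$.

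For the sufficient condition, I would invoke directly the corresponding result of \citet{IKPW16b}: under the hypothesis that $\bb$ is non-increasing with $\lim_j \beta_j = 0$, EC-$(1,1)$-WT holds for $(K_{\bb_d})_{d \in \N}$, and the reduction above transports this to Gaussian spaces.

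For the necessary condition I would argue by contrapositive. Suppose $\beta_j \geq \beta_\ast > 0$ for all $j$. Tensorizing the norm-one embedding \eqref{eq:embedding_hermite} gives a norm-one continuous embedding $H(K_{(\beta_\ast, \ldots, \beta_\ast)}) \hookrightarrow H(K_{\bb_d})$, so that $e_n(K_{\bb_d}) \geq e_n(K_{(\beta_\ast, \ldots, \beta_\ast)})$ for all $n$. Invoking the matching exponential lower bound for integration on constant Hermite tensor products from \citet{IKLP15}, there exist constants $c_1, c_2 > 0$ depending on $\beta_\ast$ such that $e_n(K_{(\beta_\ast, \ldots, \beta_\ast)}) \geq c_1 \exp(-c_2 n^{1/d})$ for all $n$. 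Inverting this yields $\nnorm(\eps, K_{\bb_d}) \geq (\ln(c_1/\eps)/c_2)^d$ for all $\eps < c_1$. Fixing any such $\eps$ small enough that $\ln(\ln(c_1/\eps)/c_2) > 0$ and letting $d \to \infty$, the quotient $\ln \nnorm(\eps, K_{\bb_d})/(d + \ln(1/\eps))$ tends to $\ln(\ln(c_1/\eps)/c_2) > 0$, which contradicts EC-$(1,1)$-WT.

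The main obstacle is securing the liminf-type lower bound $e_n(K_{(\beta_\ast, \ldots, \beta_\ast)}) \geq c_1 \exp(-c_2 n^{1/d})$ valid for every $n$: the limsup statement \eqref{g52} quoted in the introduction is too weak to force blow-up of $\nnorm(\eps, K_{\bb_d})$ for every small $\eps$, so one must appeal to the finer non-asymptotic lower bound from \citet{IKLP15}. Once that bound is in hand, the transference and monotonicity steps are routine.
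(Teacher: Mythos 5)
Your reduction to Hermite spaces via \eqref{g20}, Corollary~\ref{c2} and Lemma~\ref{l7}, the monotonicity step via the norm-one embeddings \eqref{eq:embedding_hermite}, and the sufficiency argument via \citet{IKPW16b} all coincide with the paper's proof. The problem is your necessary-condition argument. You base it on a claimed non-asymptotic lower bound $e_n(K_{(\beta_\ast,\dots,\beta_\ast)}) \geq c_1 \exp(-c_2 n^{1/d})$ valid for \emph{every} $n$, with constants depending only on $\beta_\ast$ (in particular uniform in $d$). You correctly identify this as the main obstacle, but then assert it is supplied by a ``finer non-asymptotic lower bound from \citet{IKLP15}.'' No such bound is available: the paper itself records that for $d \geq 2$ the only known lower bound on $e_n(M_d)$ is the limsup statement \eqref{g52}, which asserts $\limsup_{n\to\infty} e_n(M_d)\exp(c n^p) = \infty$ only for $p > 1/d$ and gives no control for infinitely many $n$, let alone a bound holding for all $n$ with $d$-independent constants. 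A limsup-type bound cannot be inverted into the inequality $\nnorm(\eps, K_{\bb_d}) \geq (\ln(c_1/\eps)/c_2)^d$ for a \emph{fixed} $\eps$, which is what your $d\to\infty$ contradiction needs. So this step is a genuine gap, not a routine appeal to the literature.

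The paper avoids the issue entirely: for the necessity it invokes \citet[Thm.~1.3]{IKLP15}, which is a tractability statement asserting directly that EC-$(1,1)$-WT \emph{fails} for the constant sequence $\widetilde{\bb}$ (applied with $\omega = 1/\mathrm{e}$, $a_j = \ln(1/\beta_j)$, $b_j = 1$), and then transfers the failure to $(K_{\bb_d})_d$ by the same embedding monotonicity you use, and to $(L_{\bs_d})_d$ by Corollary~\ref{c2}. If you replace your minimal-error lower bound by that citation, the rest of your argument goes through unchanged.
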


\begin{proof}

For both statements, we make use of corresponding results for Hermite 
spaces, similarly to the proof of Theorem~\ref{theo:exp}. Therefore, 
let $\bb$ be the sequence of base parameters given by \eqref{g20}.

For the first statement, suppose that $\inf_{j \in \N}\sigma_j>0$
holds. Then, we also have $\inf_{j \in \N}\beta_j>0$, which means 
there exists a
constant sequence $\widetilde{\bb}$ of base parameters such that $\bb$
is bounded from below by $\widetilde{\bb}$. Thus, we may use
\citet[Thm.~1.3]{IKLP15} in the specific case $\omega:=1/e$
as well as $a_j:=\ln(1/\beta_j)$ and $b_j:=1$, which shows that 
EC-$(1,1)$-WT does not hold for $(K_{\widetilde{\bb}_d})_{d\in\N}$.
Further, for all $d\in\N$ we have identical embeddings of
$H(K_{\widetilde{\bb}_d})$ into $H(K_{\bb_d})$ of norm one, 
cf.\ \eqref{eq:embedding_hermite}, which implies
$e_n(K_{\tilde{\bb}_d})\leq e_n(K_{\bb_d})$ for all
$d,n\in\N$. Thus, EC-$(1,1)$-WT does not hold for $(K_{\bb_d})_{d\in\N}$.
Together with Corollary~\ref{c2}, this shows the claim.

To show the second statement, we assume that $\bs$ is non-increasing 
and converges to 0. Then, $\bb$ is also
non-increasing and converges to 0. Thus, by \citet[Thm.~3]{IKPW16b},
again in the specific case 
$\omega:=1/e$ as well as $a_j:=\ln(1/\beta_j)$ and $b_j:=1$,
we have that EC-$(1,1)$-WT holds for $H(K_{\bb_d})_{d\in\N}$.
Again by Corollary~\ref{c2}, this shows the claim.
\end{proof}

Theorem~\ref{theo:(1,1)-WT} improves the previously best known sufficient 
condition for EC-$(1,1)$-WT for
Gaussian spaces, namely, that the sequence of shape parameters
is non-increasing and converges to 0 exponentially fast, 
see \citet[Thm.~1.1(e)]{KSW2017}. On the other hand, the
necessity of $\bs$ converging to 0 was already known under the
additional assumptions that the boundedness condition
\begin{equation}\label{g23}
\sup_{j \in \N} \sigma_j^2 < 1/2 
\end{equation}
is satisfied and $\bs$ is non-increasing, see
\citet[Thm.~1.1(f)]{KSW2017}. We remark that the first result for Hermite
spaces we have used in the proof,
\citet[Thm.~3]{IKPW16b},  is fully constructive.
Therefore, via the transference principle,
the sufficient condition in Theorem~\ref{theo:(1,1)-WT}
is also obtained constructively.

Our next goal is to study EC-$(t,k)$-WT in the case $t>1$. 
As an intermediary step, we derive a new upper  bound for
$\ln(\nnorm(\eps,L_{\bs_d}))$ with an explicit dependence 
on $\varepsilon$, $d$, and $\bs$.
For $d \in \N$ we put
\[
h(d) := 
\begin{cases}
0 & \text{if $d \leq 2$},\\
\ln(\ln(d)) & \text{otherwise}.
\end{cases}
\]
\begin{theo}\label{t5}
There exists a constant $C > 0$ such that
\[
\ln(\nnorm(\eps,L_{\bs_d})) \leq C \cdot \left(
d \, h(d) + d \ln(\ln(\eps^{-1})) +
\sum_{j=1}^d \left(\ln (\sigma_j)\right)^+ \right)
\]
for every $d\in\N$, every $\eps \in {]0,1/3]}$, and every
sequence $\bs := (\sigma_j)_{j \in \N}$ of shape parameters.
\end{theo}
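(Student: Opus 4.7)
My plan is to transfer the problem to the Hermite side via Corollary~\ref{c2} and then estimate the cost of an explicit tensor--product quadrature.

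First I would define $\bb := (\beta_j)_{j \in \N}$ from $\bs$ by the bijection \eqref{g20}. Corollary~\ref{c2} together with Lemma~\ref{l7} (which gives $e_0(K_{\bb_d}) = 1$) yields
\[
\nnorm(\eps,L_{\bs_d}) \;=\; \nnorm(\eps,K_{\bb_d}) \;=\; \inf\{n \in \N : e_n(K_{\bb_d}) \leq \eps\},
\]
so it is enough to build a quadrature $A$ on $H(K_{\bb_d})$ with $e(A,K_{\bb_d}) \leq \eps$ and to bound its node count. Set $\delta := \eps/(4d)$ and, using Theorem~\ref{t4} together with $C(\beta_j) \leq 1$ and $n^{-1/4} \leq 1$, pick for each $j$ a univariate rule $A_j$ with
\[
n_j \;:=\; \max\bigl(1,\, \lceil \ln(1/\delta)/\ln(1/\beta_j) \rceil\bigr)
\]
nodes whose error satisfies $e_j := e(A_j, k_{\beta_j}) \leq \beta_j^{n_j} \leq \delta$. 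Form the product $A := A_1 \otimes \cdots \otimes A_d$, with $n := \prod_{j=1}^d n_j$ nodes. Writing $\Lambda_j$ for the univariate and $\Lambda := \Lambda_1 \otimes \cdots \otimes \Lambda_d$ for the $d$--variate integration functional, the standard inclusion--exclusion decomposition of $\Lambda - A$, the product formula for dual norms on tensor--product Hilbert spaces, and the estimates $\|\Lambda_j\| = 1$ and $\|A_j\| \leq 1 + e_j$ yield
\[
e(A, K_{\bb_d}) \;\leq\; \prod_{j=1}^d(1+2e_j) - \prod_{j=1}^d(1+e_j)
\;\leq\; (1+2\delta)^d - 1 \;\leq\; e^{\eps/2} - 1 \;\leq\; \eps.
\]

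Next I would estimate $\ln n = \sum_j \ln n_j$. With $L := \ln(1/\delta) = \ln(4d/\eps)$, every $j$ with $\ln(1/\beta_j) \geq L$ contributes $\ln n_j = 0$, while for the others one has $n_j \leq 2L/\ln(1/\beta_j)$, hence $\ln n_j \leq \ln(2L) - \ln\ln(1/\beta_j)$. By \eqref{g20}, $\ln(1/\beta_j) = \ln(1 + 1/(2\sigma_j^2))$; a short case split---using $\ln(1/\beta_j) \geq \ln(3/2)$ when $\sigma_j \leq 1$, and the inequality $\ln(1+x) \geq x/2$ on $[0,1/2]$ to get $\ln(1/\beta_j) \geq 1/(4\sigma_j^2)$ when $\sigma_j > 1$---produces an absolute constant $C_0$ with
\[
-\ln\ln(1/\beta_j) \;\leq\; C_0 + 2\,(\ln\sigma_j)^+.
\]
Summing yields $\ln n \leq d\,\ln(2L) + C_0\, d + 2 \sum_{j=1}^d (\ln\sigma_j)^+$.

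The last step is to absorb $d\,\ln(2L) + C_0\, d$ into a constant multiple of $d\, h(d) + d \ln\ln(\eps^{-1})$. For $d \geq 3$, the elementary estimate $\ln(x+y+z) \leq \ln 3 + \ln x + \ln y + \ln z$ (valid for $x,y,z \geq 1$) applied with $x = \ln 4$, $y = \ln d$, $z = \ln(\eps^{-1})$ gives $\ln\ln(4d/\eps) \leq O(1) + h(d) + \ln\ln(\eps^{-1})$, since $h(d) = \ln\ln d$ in this range. For $d \in \{1,2\}$ the term $d\, h(d)$ vanishes but $d$ is bounded, and $\ln\ln(4d/\eps) \leq C'(1 + \ln\ln(\eps^{-1}))$ directly. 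Because $\eps \leq 1/3$ forces $\ln\ln(\eps^{-1}) \geq \ln\ln 3 > 0$, every additive multiple of $d$ is absorbed into a constant multiple of $d\,\ln\ln(\eps^{-1})$, completing the bound. The main work will go into this final bookkeeping and into the case split for $-\ln\ln(1/\beta_j)$; everything else is routine RKHS tensor--product manipulation.
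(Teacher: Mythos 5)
Your proof is correct, but it runs in the opposite direction from the paper's. The paper works entirely on the Gaussian side: it quotes the ready-made error bound of \citet[Thm.~2.10]{KOG21} for tensor products of scaled Gauss--Hermite rules on $H(L_{\bs_d})$, normalizes via Lemma~\ref{l7} to get $e_N/e_0\leq\sum_{j}\exp(-n_j\zeta_j)$ with $\zeta_j=\ln(1+1/(2\sigma_j^2))$, and only afterwards transfers the resulting complexity bound to Hermite spaces (that is Theorem~\ref{t7}). You instead transfer to the Hermite side first via Corollary~\ref{c2}, build the tensor-product rule there from the univariate bound of Theorem~\ref{t4}, and control the $d$-variate error by the standard telescoping/dual-norm argument. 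Since $\ln(1/\beta_j)=\zeta_j$ under \eqref{g20}, the quantitative core --- the choice $n_j\asymp\ln(d/\eps)/\zeta_j$, the split according to whether $\ln(d/\eps)$ exceeds a multiple of $\zeta_j$, the bound $-\ln\zeta_j\leq C_0+2(\ln\sigma_j)^+$, and the final absorption into $d\,h(d)+d\ln\ln(\eps^{-1})$ --- is the same in both proofs; your case split at $\sigma_j=1$ replaces the paper's slightly more roundabout treatment ($\sigma_j\geq 2$ first, then monotonicity of the bound in $\sigma_j$). What your route buys is self-containedness (no appeal to the external multivariate bound from \citet{KOG21}, only to the univariate Theorem~\ref{t4}) and it delivers Theorem~\ref{t7} for free; what it costs is that the multiplicative error aggregation $\prod_j(1+e_j)-1$ is slightly lossier than the additive bound $\sum_j\exp(-n_j\zeta_j)$ the paper inherits, which is why you need the extra factor $4$ in $\delta=\eps/(4d)$. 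One small imprecision: the intermediate expression $\prod_j(1+2e_j)-\prod_j(1+e_j)$ is not what the telescoping decomposition actually yields (it gives $\sum_k e_k\prod_{i<k}(1+e_i)=\prod_j(1+e_j)-1$), but your subsequent numerical chain goes through verbatim with the correct expression, so nothing is lost.
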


\begin{proof}
Put
\[
\zeta_j := \ln \left( 1 + \frac{1}{2\sigma_j^2} \right).
\]
From \citet[Thm.~2.10]{KOG21} we get 
\[
e_N( L_{\bs_d}) \leq
\sum_{j=1}^{d} (1+2\sigma_j^2)^{-1/2}
\cdot \prod_{\substack{i=1\\i\neq j}}^d (1+4\sigma_i^2)^{-1/4}
\cdot \exp( - n_j \zeta_j)
\]
for all $d\in\N$ and $(n_1,\dots,n_d) \in \N^d$ with $N
:= \prod_{j=1}^d n_j$. Together with Lemma~\ref{l7} this
implies
\[
\frac{e_N( L_{\bs_d})}{e_0( L_{\bs_d})}
\leq 
\sum_{j=1}^{d} (1+2\sigma_j^2)^{-1/2} 
\cdot (1+4\sigma_j^2)^{1/4} \cdot \exp(- n_j \zeta_j)
\leq \sum_{j=1}^d \exp(- n_j \zeta_j).
\]
Choosing
\[
n_j := 
\left\lceil\frac{\ln(d/\eps)} {\zeta_j}\right\rceil
\]
we obtain $\exp(-n_j \zeta_j )\leq \eps/d$ and therefore
$\nnorm(\eps,L_{\bs_d})\leq N$, i.e.,
\begin{equation}\label{g14}
n(\eps,d) :=
\nnorm(\eps,L_{\bs_d})\leq 
\prod_{j=1}^d
\left( \frac{\ln(d/\eps)}{\zeta_j} + 1 \right), 
\end{equation}
cf.\ \citet[Eqn.~(3.9)]{KSW2017}.

For $z \geq 0$ we define
\[
\phi(z) := \ln(1+z),
\]
and we put $\tau := \ln(d/\eps) \geq 0$ as well as
\[
J_1 := \{ j \in \{1,\dots,d\} \colon \tau \leq 2 \zeta_j\}
\qquad \text{and} \qquad
J_2 := \{1,\dots,d\} \setminus J_1.
\]
Clearly,
$\phi(z) \leq z$ for every $z \geq 0$ and  
$\phi(z) \leq 3 \ln(z)$ if $z > 2$,
and therefore
\begin{align*}
\ln(n(\eps,d)) &\leq 
\sum_{j=1}^d \ln(\tau/\zeta_j + 1 )
\leq
\sum_{j \in J_1} \tau/\zeta_j + 3 \sum_{j \in J_2} \ln(\tau/\zeta_j)\\
& \leq 2 d +
3 \sum_{j \in J_2} \ln(\tau/\zeta_j).
\end{align*}

In the sequel, we assume that $\eps \in {]0,1/3]}$, so
that $\ln(\tau) \geq \ln(\ln(3)) > 0$. With $c_1 := 2/\ln(\ln(3))+3$
we obtain
\begin{equation}\label{g13}
\ln(n(\eps,d)) \leq c_1 d \ln(\tau)
+ 3 \sum_{j \in J_2} \ln(1/\zeta_j).
\end{equation}

Let us consider the particular case that 
\[
\inf_{j \in \N} \sigma_j \geq 2.
\]
We put
\[
\psi(y) := \ln \left( 1 + 1/(2y^2) \right)
\]
for $y \geq 2$, so that $\zeta_j = \psi(\sigma_j)$. Since
$\lim_{y \to \infty} y^2 \cdot \psi(y) = 1/2$, we have 
\[
c_2 := \inf_{y \geq 2} y^2 \cdot \psi(y) \in {]0,1/2]}.
\]
With $c_3 := 2 + \ln(1/c_2)/\ln(2)$ we obtain
\[
\ln \left( 1/ \psi(y)\right) \leq \ln(y^2/c_2)
= \ln(y) \cdot \left( 2 + \ln(1/c_2)/\ln(y) \right) \leq c_3 \ln(y) 
\]
for every $y \geq 2$. Consequently,
\[
\sum_{j \in J_2} \ln(1/\zeta_j) =
\sum_{j \in J_2} \ln\left((1/\psi(\sigma_j) \right)
\leq c_3 \sum_{j=1}^d \ln(\sigma_j).
\
\]
Together with \eqref{g13}, this implies
\[
\ln(n(\eps,d)) \leq c_1 d \ln(\tau) + 3 c_3 \sum_{j=1}^d \ln (\sigma_j).
\]

Observe that the upper bound in \eqref{g14} is a monotonically
increasing function in each of the variables $\sigma_j$.
For any sequence of shape parameters we therefore have
\[
\ln(n(\eps,d)) \leq c_1 d \ln(\tau) + 3 c_3 \sum_{j=1}^d 
\ln (\max(\sigma_j,2)).
\]
Since
\[
\ln (\max(\sigma_j,2)) \leq (\ln(\sigma_j))^+ + 1,
\]
we obtain
\[
\ln(n(\eps,d)) \leq c_1 d \ln(\tau) + 3 c_3 d + 3 c_3 \sum_{j=1}^d
(\ln(\sigma_j))^+.
\]

Finally, there exists a constant $c_4 > 0$ such that
\begin{align*}
\ln(\tau) &= 
\ln \left( \ln(d) + \ln(\eps^{-1}) \right) 
\leq
\ln \left( 2 \max (\ln(d),\ln(\eps^{-1})) \right) \\
&\leq
c_4 \ln \left( \max (\ln(d),\ln(\eps^{-1})) \right)  \leq 
c_4 \left( h(d) + \ln(\ln(\eps^{-1})) \right) 
\end{align*}
for all $d\in\N$ and $\eps \in {]0,1/3]}$.
\end{proof}

Next, we present a sufficient condition for EC-$(t,k)$-WT to hold 
for some $t>1$ and every $\kappa> 0$.

\begin{theo}\label{t6}
If there exist $c,\alpha \geq 0$ such that the shape parameters satisfy
\[
\sigma_j\leq \exp(c j^\alpha)
\]
for all $j\in\N$, then we have EC-$(t,\kappa)$-WT for
$(L_{\bs_d})_{d \in \N}$ and the normalized and the absolute
error criterion for every $t>\alpha+1$ and every $\kappa >0$.
\end{theo}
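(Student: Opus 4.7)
The plan is to derive the tractability statement directly from the explicit bound in Theorem~\ref{t5}, after estimating the sum $\sum_{j=1}^{d}(\ln\sigma_j)^+$ using the growth hypothesis on $\bs$, and then to handle the absolute error criterion by a simple comparison with the normalized one.

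First I would invoke Theorem~\ref{t5} and, under the assumption $\sigma_j \le \exp(cj^\alpha)$, estimate
\[
\sum_{j=1}^{d}(\ln\sigma_j)^+ \le c\sum_{j=1}^{d}j^\alpha \le c_1 d^{\alpha+1}
\]
for a suitable constant $c_1 > 0$ (using $\sum_{j=1}^d j^\alpha \le (\alpha+1)^{-1}(d+1)^{\alpha+1}$ or just monotonicity). Combined with Theorem~\ref{t5}, this yields a constant $C_0$ such that
\[
\ln(\nnorm(\eps,L_{\bs_d})) \le C_0\bigl(d\,h(d) + d^{\alpha+1} + d\,\ln(\ln(\eps^{-1}))\bigr)
\]
for every $d\in\N$ and every $\eps\in\,]0,1/3]$.

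Next I would show that each of the three terms above, divided by $d^t+(\ln(\eps^{-1}))^\kappa$, tends to zero as $d+\eps^{-1}\to\infty$, for $t>\alpha+1$ and $\kappa>0$. The terms $d\,h(d)$ and $d^{\alpha+1}$ are handled by the bound $d^t$ in the denominator: since $h(d)=O(\ln\ln d)$, both $d\,h(d)/d^t\to 0$ (using $t>1$) and $d^{\alpha+1}/d^t\to 0$ (using $t>\alpha+1$); one separates the cases where $d$ is bounded and where $d\to\infty$ to conclude. The main obstacle is the mixed term $d\,\ln(\ln(\eps^{-1}))$, which mixes both parameters. I would treat it via Young's inequality: for any $\delta>0$,
\[
d\,\ln(\ln(\eps^{-1})) \le \frac{d^{t}}{t\,\delta^{t}} + \frac{\delta^{t/(t-1)}\bigl(\ln(\ln(\eps^{-1}))\bigr)^{t/(t-1)}}{t/(t-1)}.
\]
After dividing by $d^t + (\ln(\eps^{-1}))^\kappa$ the first summand contributes at most $1/(t\delta^t)$, which can be made arbitrarily small by choosing $\delta$ large, while the second summand tends to $0$ because $(\ln\ln r)^{t/(t-1)} = o((\ln r)^\kappa)$ as $r\to\infty$ for any $\kappa>0$. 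Combining these estimates gives EC-$(t,\kappa)$-WT for the normalized error criterion.

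Finally, for the absolute error criterion I would use Lemma~\ref{l7}: $e_0(L_{\bs_d}) = \prod_{j=1}^{d}(1+4\sigma_j^2)^{-1/4}\le 1$, hence $\nabs(\eps,L_{\bs_d}) \le \nnorm(\eps,L_{\bs_d})$, and the same bound transfers. The only non-trivial issue is the mixed term, but the Young-type splitting above is robust and should settle it without further difficulty; the rest of the proof amounts to recording the elementary asymptotic comparisons.
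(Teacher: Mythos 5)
Your proposal is correct and follows essentially the same route as the paper: invoke Theorem~\ref{t5}, bound $\sum_{j=1}^d(\ln\sigma_j)^+\le c\sum_{j=1}^d j^\alpha\le c\,d^{\alpha+1}$, check that each resulting term over $d^t+(\ln(\eps^{-1}))^\kappa$ vanishes (using $t>\alpha+1$ for the sum and $t>1$, $\kappa>0$ for the rest), and reduce the absolute criterion to the normalized one via $e_0(L_{\bs_d})\le 1$. The only difference is that you spell out, via Young's inequality, the mixed-term limit that the paper dismisses as clear; that added detail is sound.
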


\begin{proof}
It suffices to consider the normalized error criterion. We apply the upper bound for
\[
n(\eps,d) := \nnorm(\min(\eps,1/3),L_{\bs_d}) \geq
\nnorm(\eps,L_{\bs_d}).
\]
from Theorem \ref{t5}. Clearly, we have
\[
\lim_{d+\eps^{-1}\to\infty} 
\frac{d \, h(d) + d\ln(\ln(\max(\eps^{-1},3)))}
{d^t+\ln(\eps^{-1})^{\kappa}}=0
\]
even for $t>1$, so it suffices to show
\[
\lim_{d+\eps^{-1}\to\infty} 
\frac{\sum_{j=1}^d \left(\ln (\sigma_j)\right)^+}
{d^t+ \ln(\eps^{-1})^{\kappa}}=0
\]
for $t > \alpha+1$. This is indeed true, since
\[
\sum_{j=1}^d \left(\ln (\sigma_j)\right)^+ 
\leq \sum_{j=1}^d c j^\alpha
\leq c d^{\alpha+1}. \qedhere
\]
\end{proof}

Theorem \ref{t6} extends the result established in 
\citet[Thm.~1.1.(h)]{KSW2017},
where the boundedness condition \eqref{g23}
is shown to imply EC-$(t,\kappa)$-WT for every $t>1$ and
every $\kappa \geq 1$ under the additional assumption
that $\bs$ is non-increasing.
Theorem~\ref{t6} yields, in particular, that the latter already holds if
$(\sigma_j)_{j \in \N}$ is polynomially bounded.

Next, we apply the transference result to obtain new
results for Hermite spaces, where the normalized and the
absolute error criterion coincide.

At first, corresponding to Theorem~\ref{t5},
we provide a new upper bound for $\ln(\nnorm(\varepsilon,K_{\bb_d}))$
with an explicit dependence on $\varepsilon$, $d$, and $\bb$.

\begin{theo}\label{t7}
There exists a constant $C > 0$ such that
\[
\ln(\nnorm(\eps,K_{\bb_d})) \leq C \cdot \left(
d \, h(d) + d \ln(\ln(\eps^{-1})) - 
\sum_{j=1}^d \ln (1-\beta_j) \right)
\]
for every $d\in\N$, every $\eps \in {]0,1/3]}$, and every
sequence $\bb := (\beta_j)_{j \in \N}$ of base parameters in $(0,1)$.
\end{theo}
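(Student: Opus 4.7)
The plan is to derive Theorem~\ref{t7} from Theorem~\ref{t5} by transferring to the Gaussian side via the bijection \eqref{g20}. Given base parameters $\beta_j \in (0,1)$, I would define shape parameters $\sigma_j > 0$ through the relation \eqref{g20}, which solves explicitly to
\[
\sigma_j^2 = \frac{\beta_j}{2(1-\beta_j)}.
\]
With this choice, Corollary~\ref{c2} together with $e_0(K_{\bb_d})=1$ from Lemma~\ref{l7} gives $e_n(L_{\bs_d})/e_0(L_{\bs_d}) = e_n(K_{\bb_d})$ for every $n \in \N$, and hence $\nnorm(\eps,K_{\bb_d}) = \nnorm(\eps,L_{\bs_d})$ for every $\eps \in (0,1)$.

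Next I would apply Theorem~\ref{t5} to the right-hand side to obtain
\[
\ln(\nnorm(\eps,K_{\bb_d})) \leq C \cdot \Bigl( d\, h(d) + d \ln(\ln(\eps^{-1})) + \sum_{j=1}^d (\ln(\sigma_j))^+ \Bigr)
\]
for all $d \in \N$ and all $\eps \in {]0,1/3]}$, where $C$ is the constant from Theorem~\ref{t5}. It then remains to dominate the sum $\sum_{j=1}^d (\ln(\sigma_j))^+$ by the quantity $-\sum_{j=1}^d \ln(1-\beta_j)$ appearing in the claimed bound.

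For this comparison I would take logarithms in the explicit formula above to get
\[
2 \ln(\sigma_j) = \ln(\beta_j) - \ln(2) - \ln(1-\beta_j) \leq -\ln(1-\beta_j),
\]
valid since $\ln(\beta_j) < 0$ and $\ln(2) > 0$ for $\beta_j \in (0,1)$. Because $-\ln(1-\beta_j) \geq 0$, taking positive parts on both sides and dividing by $2$ yields
\[
(\ln(\sigma_j))^+ \leq -\tfrac{1}{2}\ln(1-\beta_j).
\]
Summing over $j=1,\dots,d$ and absorbing the harmless factor $1/2$ into the constant gives Theorem~\ref{t7}.

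I do not anticipate a real obstacle: the heavy lifting has already been done in Theorems~\ref{t3} and \ref{t5}. The only point that requires care is ensuring that the constant $C$ can be chosen uniformly in $\bb$ and in $d$, which is immediate since the constant from Theorem~\ref{t5} has exactly this property and the elementary inequality above is uniform in $\beta_j \in (0,1)$.
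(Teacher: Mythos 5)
Your proposal is correct and follows essentially the same route as the paper: the paper's own (very terse) proof likewise defines $\bs$ via \eqref{g20}, notes $2\sigma_j^2 < 1/(1-\beta_j)$, and invokes Corollary~\ref{c2} and Theorem~\ref{t5}. Your write-up merely spells out the elementary estimate $(\ln(\sigma_j))^+ \leq -\tfrac12 \ln(1-\beta_j)$ that the paper leaves implicit.
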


\begin{proof}
Since \eqref{g20} implies $2 \sigma_j^2 < 1/(1-\beta_j)$, the
statement follows from Corollary~\ref{c2} and Theorem~\ref{t5}.
\end{proof}

Next, corresponding to Theorem~\ref{t6}, 
we give a sufficient condition for EC-$(t,\kappa)-WT$ to hold
for some $t>1$ and every $\kappa> 0$ in the case of Hermite spaces.

\begin{theo}\label{t8}
If there exist $c,\alpha \geq 0$
such that the base parameters satisfy
\[
\beta_j \leq 1 - \exp(- c j^\alpha)
\]
for all $j\in\N$, then we have EC-$(t,\kappa)$-WT for
$(K_{\bb_d})_{d \in \N}$ and the normalized error criterion
for every $t>\alpha+1$ and every $\kappa \geq 1$.
\end{theo}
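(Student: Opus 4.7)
The plan is to adapt the proof of Theorem~\ref{t6} by replacing the Gaussian complexity bound of Theorem~\ref{t5} by its Hermite counterpart Theorem~\ref{t7}. The translation of the hypothesis is immediate: $\beta_j \leq 1 - \exp(-c\, j^\alpha)$ is equivalent to $1 - \beta_j \geq \exp(-c\, j^\alpha)$, whence
\[
-\ln(1-\beta_j) \leq c\, j^\alpha \qquad \text{for every } j \in \N.
\]

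First I would reduce to the regime $\eps \in {]0,1/3]}$ in which Theorem~\ref{t7} applies: by monotonicity of $\eps \mapsto \nnorm(\eps, K_{\bb_d})$ one has $\nnorm(\eps, K_{\bb_d}) \leq \nnorm(\min(\eps, 1/3), K_{\bb_d})$ for all $\eps \in {]0,1[}$, so no generality is lost when divided against a function that tends to infinity with $d + \eps^{-1}$. Theorem~\ref{t7} then supplies an upper bound on $\ln(\nnorm(\eps, K_{\bb_d}))$ made of three summands, which I would handle separately.

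For the summand $d\, h(d)$, the growth is $O(d \ln \ln d) = o(d^t)$ for any $t > 1$. For $d \ln(\ln(\eps^{-1}))$, the double logarithm grows slower than any positive power of $\ln(\eps^{-1})$, so this term is $o(d^t + \ln(\eps^{-1})^\kappa)$ for every $t > 1$ and $\kappa \geq 1$. For the last summand, the hypothesis yields
\[
-\sum_{j=1}^{d} \ln(1-\beta_j) \;\leq\; c \sum_{j=1}^{d} j^\alpha \;\leq\; c\, d^{\alpha+1},
\]
which is $o(d^t)$ precisely when $t > \alpha + 1$. Combining these three estimates gives
\[
\lim_{d + \eps^{-1} \to \infty} \frac{\ln(\nnorm(\eps, K_{\bb_d}))}{d^t + \ln(\eps^{-1})^\kappa} = 0
\]
for every $t > \alpha + 1$ and every $\kappa \geq 1$, which is exactly EC-$(t,\kappa)$-WT for the normalized error criterion.

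There is no genuine obstacle: the heavy lifting has been done in Theorem~\ref{t7}, which relies on the transference Corollary~\ref{c2} together with the Gaussian bound of Theorem~\ref{t5}. The present argument is a routine asymptotic check that the at most exponential-in-$j^\alpha$ growth permitted for $1/(1-\beta_j)$ is absorbed by the $d^t$ term in the denominator as soon as $t > \alpha+1$.
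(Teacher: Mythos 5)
Your proposal is correct and follows exactly the paper's route: the paper's proof of Theorem~\ref{t8} consists of noting that the hypothesis gives $-\ln(1-\beta_j)\leq c\,j^\alpha$ and then "proceeding as in the proof of Theorem~\ref{t6}", which is precisely the substitution of Theorem~\ref{t7} for Theorem~\ref{t5} and the same three-term asymptotic check that you carry out.
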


\begin{proof}
The assumption on the shape parameters $\beta_j$ implies
$-\ln(1-\beta_j) \leq c j^\alpha$. Thus we may proceed as in the 
proof of Theorem~\ref{t6} to verify the claim.
\end{proof}

\begin{rem}\label{rem:ECWT}
On the one hand, rather mild assumptions on the shape parameters or 
the base parameters imply EC-$(t,\kappa)$-WT for every $t > 1$
regardless of choice of $\kappa$, see
Theorems~\ref{t6} and \ref{t8}. On the other hand, in the case
$t=1$, much stronger
properties are required even for EC-$(t,1)$-WT to hold, see
Theorem~\ref{theo:(1,1)-WT} and \citet[Thm.~1.3]{IKLP15}.
We add that necessary conditions for EC-$(t,\kappa)$-WT are only known 
in the case $t=1$ and $0<\kappa\leq1$.

\end{rem}

\subsection{Functions of Infinitely Many Variables}
\label{SUBSEC:integr_IMV}
In this section, we consider the case $d=\infty$.
For $M = L_\bs$ or $M=K_\bb$, we study the decay
\begin{equation}\label{eq:dec}
\dec(M) := \decay \left( (e_n(M))_{n \in \N} \right)
\end{equation}
of the $n$-th minimal worst-case errors, see \eqref{g4}.
Note that a lower bound for $\dec (M)$ corresponds to an upper bound for 
the $n$-th minimal errors $e_n(M)$ and vice versa.
Our goal is to determine $\dec(L_\bs)$, and to this end we put  
\begin{equation}\label{eq:rho}
\rho:=   \liminf_{j\to \infty}
\frac{\ln(1/\sigma_j^2)}{\ln(j)}.
\end{equation}
Note that \eqref{g1a} implies $\rho \geq 1$, see, e.g.,
\citet[Lemma~B.3]{GHHRW2020}.

Recall that $\$(m)$ denotes the cost of evaluating a function
$f \colon \ell^2(\bs^2) \to \R$ at any point $\bx$ with exactly $m$
non-zero components, see Section~\ref{s2a}.
We assume that 
there exist $c_1,c_2 > 0$ such that
\begin{equation}\label{g82}
c_1 \cdot m \leq \$(m) \leq \exp(c_2 \cdot m)
\end{equation}
for all $m \in \N$.

\begin{theo}\label{Thm:InfDimInt}
Assume that \eqref{g1a} is satisfied.
The polynomial decay rate of $n$-th minimal errors of integration on the 
Gaussian space $H(L_\bs)$ is 
\[
\dec(L_\bs) = \frac{1}{2} (\rho - 1).
\]
\end{theo}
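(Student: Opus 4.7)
The plan is to reduce the claim to the corresponding decay result for the Hermite kernel $K_{\bb}$ from \citet{GneEtAl24} via the transference provided by Corollary~\ref{c2}. Let $\bb := (\beta_j)_{j \in \N}$ be the sequence of base parameters defined by \eqref{g20}, namely $\beta_j = 2\sigma_j^2/(1+2\sigma_j^2)$; by Remark~\ref{r4} the summability condition \eqref{g1b} holds, so $K_{\bb}$ is well-defined and the hypotheses of Corollary~\ref{c2} are satisfied.

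First I would invoke Corollary~\ref{c2} to get $e_n(L_{\bs}) = e_0(L_{\bs}) \cdot e_n(K_{\bb})$ for every $n \in \N$. By Lemma~\ref{l7} the factor $e_0(L_{\bs}) = \prod_{j \in \N} (1+4\sigma_j^2)^{-1/4}$ is a strictly positive, finite constant, because $\sum_j \ln(1+4\sigma_j^2) \leq 4 \sum_j \sigma_j^2 < \infty$ by \eqref{g1a}. A positive multiplicative constant leaves the polynomial decay rate unchanged, so
\[
\dec(L_{\bs}) = \dec(K_{\bb}).
\]
Implicit here is that the bijection of quadrature rules underlying Corollary~\ref{c2} preserves the worst-case cost in the unrestricted subspace sampling model; this is precisely the content of Remark~\ref{r7}, where the identity $\by_i = \bee \bx_i$ with all $e_j \neq 0$ gives $\Act(\bx_i) = \Act(\by_i)$ and hence $\cost(A) = \cost(B)$.

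Second, I would match the parameter $\rho$ against the analogous liminf parameter for the Hermite problem. Since \eqref{g1a} forces $\sigma_j \to 0$, we have $\beta_j = 2\sigma_j^2 \cdot (1+o(1))$, so
\[
\frac{\ln(1/\beta_j)}{\ln j} = \frac{\ln(1/\sigma_j^2)}{\ln j} - \frac{\ln 2 + o(1)}{\ln j},
\]
and the correction term tends to $0$. Therefore $\liminf_{j \to \infty} \ln(1/\beta_j)/\ln(j) = \rho$. Finally I would cite the corresponding theorem from \citet{GneEtAl24} for infinite-variate integration on $H(K_{\bb})$, which under the cost assumption \eqref{g82} asserts $\dec(K_{\bb}) = (\rho - 1)/2$ with $\rho$ realized via this same liminf in $\beta_j$. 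Chaining the equalities produces $\dec(L_{\bs}) = (\rho-1)/2$.

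The main obstacle is not analytic but bookkeeping: one must verify that the cost structure is faithfully transported by the bijection in Remark~\ref{r7} so that the Hermite result can be imported verbatim, and that the liminf parameter is stable under the bijection \eqref{g20} in the regime $\sigma_j \to 0$. The deep work, namely the matching upper bound via multivariate decomposition methods on $H(K_{\bb})$ (to be transferred back to $H(L_{\bs})$ as outlined in Section~\ref{SUBSEC:integr_IMV}) and the matching lower bound for $e_n(K_{\bb})$, has already been carried out in \citet{GneEtAl24}.
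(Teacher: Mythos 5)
Your proposal is correct and follows essentially the same route as the paper's own proof: transfer $\dec(L_\bs)=\dec(K_\bb)$ via Corollary~\ref{c2} (using that $e_0(L_\bs)$ is a positive constant and that the bijection of Remark~\ref{r7} preserves cost), identify $\liminf_j \ln(1/\beta_j)/\ln(j)$ with $\rho$ using $\beta_j \asymp \sigma_j^2$ from \eqref{g20}, and import $\dec(K_\bb)=(\rho-1)/2$ from \citet[Cor.~4.10]{GneEtAl24}. The only detail the paper makes explicit that you leave implicit is the verification that the weights $\beta_j^{-\nu}$ are exponentially growing Fourier weights in the sense required by that corollary (after reordering so that $\inf_j r_j$ is attained at $j=1$).
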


\begin{proof}
Consider the sequence $\bb$ of base parameters given by \eqref{g20}. Corollary \ref{c2} immediately yields
\begin{equation}\label{eq:decay}
\dec(L_\bs) = \dec(K_\bb)
\end{equation}
and thus it suffices to show $\dec(K_\bb) = \frac{1}{2} (\rho - 1)$.
This has already been established in \citet{GneEtAl24}
for an arbitrary sequence $\bb$ of base parameters such that 
$\sum_{j \in \N} \beta_j < \infty$.
Indeed, put $r_j:= - \log_2(\beta_j)$.
After a possibly necessary reordering of the kernels, we have
$r_1=\inf_{j\in\N} r_j$. Then the weights 
$\beta_j^{-\nu} = 2^{r_j \cdot \nu}$ with $\nu,j\in\N$ are 
exponentially growing Fourier weights in the sense of 
\citet[Sec.~2.1]{GneEtAl24}. 
Hence we obtain from \citet[Cor.~4.10]{GneEtAl24} that the polynomial 
decay rate of $n$-th minimal errors of integration on the 
Hermite space $H(K_\bb)$ is 
\[
\dec(K_\bb) = \frac{1}{2} (\widetilde{\rho} - 1),
\]
where
\[
\widetilde{\rho} := \liminf_{j\to \infty} \frac{r_j \cdot \ln(2)}{\ln(j)}.
\]
Given \eqref{g20} we have $\beta_j \asymp \sigma_j^2$, and therefore
\[
\rho = \liminf_{j\to \infty} \frac{\ln(1/\beta_j)}{\ln(j)} = 
\widetilde{\rho}.
\qedhere
\] 
\end{proof}

Algorithms that yield $\dec(L_\bs) \geq (\rho - 1)/2$
are obtained by applying the transference result to properly chosen
multivariate decomposition methods (MDMs) on the space $H(K_\bb)$.
Here we sketch the construction of the MDMs,
see \citet[Sec.~4.3]{GneEtAl24} for details and further references.

Let $\bU$ denote the set of all finite subsets of $\N$. 
For $\bx \in \R^\N$ and $\bv \in \bU$ we use $(\bx_\bv, \bn_{\bv^c})$ 
to denote the point $\by \in \R^\N$ given by $y_j := x_j$ if $j\in \bv$ 
and $y_j=0$ otherwise. Due to Lemma~\ref{l6},
we always have $(\bx_\bv, \bn_{\bv^c}) \in \X(K_\bb)$.
The construction of MDMs 
is based on the anchored decomposition
\begin{equation}\label{g81}
f = \sum_{\bu \in \bU} f_\bu
\end{equation}
of the functions $f \in H(K_\bb)$, where
\[
f_\bu(\bx) = 
\sum_{\bv\subseteq \bu} (-1)^{|\bu\setminus \bv|} \cdot
f(\bx_\bv, \bn_{\bv^c})
\]
for every $\bx \in \X(K_\bb)$. The anchored components
$f_\bu$ of $f$ only depend on the variables with indices
in $\bu$, so that $f_\bu$ may be considered as a multivariate
function with domain $\R^{|\bu|}$. We stress that \eqref{g81} is
a non-orthogonal decomposition in the space $H(K_\bb)$.

At first, we choose a finite set $\cB$ of non-empty elements of
$\bU$, which specifies the anchored components $f_\bu$,
whose integrals with respect to the $|\bu|$-dimensional standard
normal distribution will be approximated.
For each $\bu \in \cB$ we employ a Smolyak formula $B_{\bu,n_\bu}$,
which uses $n_\bu \in \N$ function values for the integration 
of the functions $f_\bu$.
 The corresponding MDM is given by
\begin{equation}\label{eq:form_MDM}
B (f) := 
f(\bn) + 
\sum_{\bu \in \cB} B_{\bu,n_\bu}(f_\bu).
\end{equation}
As building blocks for the Smolyak formulas, we use
a sequence $(B_n)_{n\in\N}$ of univariate quadrature formulas,
where $B_n$ uses $n$ function evaluations and $e(B_n,k_{\beta_1})$
converges to 0 sufficiently fast, see again \citet[Sec.~4.3]{GneEtAl24} for details.
For instance, we may use Gauss-Hermite rules $B_n$, which yield
\[
e(B_n,k_{\beta_1})\leq C\cdot\beta_1^n
\]
with an explicitly known absolute constant $C$, see \citet[Prop.~1]{IKLP15} in the
specific case $\omega:=1/e$ as well as $a:=\ln(1/\beta_1)$ and $b:=1$.
For alternatives, see Section~\ref{s4.2}.
For the specific choice of $\cB$ and of $n_\bu$ for $\bu \in \cB$
we refer again to \citet[Sec.~4.3]{GneEtAl24}.

\section{$L^2$-Approximation}\label{s3}

As before, we consider sequences $\bs := (\sigma_j)_{j\in J}$ and
$\bb := (\beta)_{j\in J}$ of shape parameters and base parameters, 
respectively.

\subsection{The Transference Result}

In contrast
to integration, we may use the relation \eqref{g2} directly to establish
a transference result for $L^2$-approximation in a straightforward
way.

\begin{theo}\label{t2}
Assume that \eqref{g1a} is satisfied. Moreover, assume that
\eqref{g2} and \eqref{g3} are satisfied for every $j \in J$.
For every linear sampling method $A$ with nodes from $\ell^2(\bs^2)$ we
have
\[
e(A,L_{\bs}) = (\be-\bb)_\ast^{1/2} \cdot e(Q_\bc^{-1} A Q_\bc , K_\bb).
\]
\end{theo}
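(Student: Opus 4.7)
The plan is to exploit Theorem~\ref{t1} in its full strength: under the present hypotheses, $Q_\bc$ is simultaneously an isometric isomorphism on $L^2(\mu)$ and an isometric isomorphism from $H((\be-\bb)_\ast K_\bb)$ onto $H(L_\bs)$. In contrast to the integration case treated in Theorem~\ref{t3}, the $L^2$-norm of the approximation residual is \emph{preserved} by $Q_\bc$, so no auxiliary rescaling $t_\bt$ is required. I therefore expect a direct substitution $f = Q_\bc g$ to do the job in one pass.

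Concretely, first I would parametrize the unit ball of $H(L_\bs)$ via $f = Q_\bc g$ with $g \in H(K_\bb)$. Using $\|h\|_{H(cK)}^2 = c^{-1} \|h\|_{H(K)}^2$ for any positive scalar $c$, Theorem~\ref{t1} yields
\[
\|Q_\bc g\|_{H(L_\bs)} = \|g\|_{H((\be-\bb)_\ast K_\bb)} = (\be-\bb)_\ast^{-1/2} \cdot \|g\|_{H(K_\bb)},
\]
so that the unit ball in $H(L_\bs)$ corresponds under $Q_\bc$ to the ball of radius $(\be-\bb)_\ast^{1/2}$ in $H(K_\bb)$. Next, applying the $L^2(\mu)$-isometry $Q_\bc^{-1}$ inside the error norm and setting $B := Q_\bc^{-1} A Q_\bc$, I obtain
\[
\|Q_\bc g - A(Q_\bc g)\|_{L^2(\mu)} = \|g - B(g)\|_{L^2(\mu)}.
\]
Taking the supremum over the ball and factoring out the scaling constant then produces $e(A,L_\bs) = (\be-\bb)_\ast^{1/2} \cdot e(B, K_\bb)$, as required.

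The one genuinely delicate step is verifying that $B$ actually belongs to the class~\eqref{g5} of linear sampling methods on $H(K_\bb)$, so that $e(B, K_\bb)$ is defined in the first place. Writing $A(f) = \sum_{i=1}^n f(\bx_i) \cdot a_i$ and using the formula $Q_\bc g (\bx_i) = \bc_\ast^{1/2} \phi_\bc(\bx_i) \cdot g(\bc \bx_i)$, linearity of $Q_\bc^{-1}$ gives
\[
B(g) = \sum_{i=1}^n g(\bc \bx_i) \cdot b_i, \qquad b_i := \bc_\ast^{1/2} \phi_\bc(\bx_i) \cdot Q_\bc^{-1}(a_i) \in L^2(\mu).
\]
To conclude, the new nodes $\bc \bx_i$ must be checked to lie in $\X(K_\bb) = \ell^2(\bb)$, which follows from Remark~\ref{r4} together with $c_j^2 - 1 \asymp \sigma_j^2$, making $\bc$ a bounded multiplier on $\ell^2(\bs^2) = \ell^2(\bb)$. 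The main obstacle I anticipate is nothing more than the bookkeeping of the two isometries in Theorem~\ref{t1}: getting the scalar $(\be-\bb)_\ast^{1/2}$ right requires remembering that $Q_\bc$ maps $H((\be-\bb)_\ast K_\bb)$, not $H(K_\bb)$ itself, isometrically onto $H(L_\bs)$.
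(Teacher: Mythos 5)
Your proposal is correct and follows essentially the same route as the paper: apply Theorem~\ref{t1}, use the isometry onto $H((\be-\bb)_\ast K_\bb)$ to extract the factor $(\be-\bb)_\ast^{1/2}$, and use the $L^2(\mu)$-isometry to replace $\|Q_\bc g - A Q_\bc g\|_{L^2(\mu)}$ by $\|g - Q_\bc^{-1}AQ_\bc g\|_{L^2(\mu)}$. Your final verification that $Q_\bc^{-1}AQ_\bc$ is again a linear sampling method with admissible nodes is exactly the content the paper defers to Remark~\ref{r5}.
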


\begin{proof}
We apply Theorem~\ref{t1}. Put $Q := Q_\bc$.
Since $Q_{|H(K_\bb)}$ is an isomorphism from
$H(K_\bb)$ to $H(L_{\bs})$ with
\[
\Vert Q f\|^2_{H(L_\bs)} =
\Vert f\|^2_{H((\be-\bb)_\ast K_\bb)} = (\be-\bb)_\ast^{-1} \cdot
\Vert f\|^2_{H(K_\bb)}
\]
for every $f \in H(K_\bb)$, we have
\[
e(A,L_{\bs}) = 
(\be-\bb)_\ast^{1/2} \cdot 
\sup_{\Vert f \Vert_{H(K_\bb)} \leq 1} 
\Vert Q f - A Qf \Vert_{L^2(\mu)}.
\]
Furthermore,
\[
\Vert Q f - A Qf \Vert_{L^2(\mu)} = \Vert f - 
Q^{-1} A Q f \Vert_{L^2(\mu)},
\]
since $Q$ is an isometric isomorphism on $L^2(\mu)$.
\end{proof}

\begin{rem}\label{r5}
Let the assumptions from Theorem~\ref{t2} be satisfied.
For a linear sampling method $A$ on the Gaussian space $H(L_\bs)$
according to \eqref{g5} with nodes from $\ell^2(\bs)$
the linear sampling method $Q_\bc^{-1} A Q_\bc$
on the Hermite space $H(K_\bb)$ is easily
determined explicitly: For $f \colon \ell^2(\bs^2) \to \R$ we have
\[
Q_\bc^{-1} A Q_\bc (f) =
\sum_{i=1}^n Q_\bc f (\bx_i) \cdot Q_\bc^{-1} a_i =
\sum_{i=1}^n f (\by_i) \cdot b_i 
\]
with $\by_i \in \ell^2(\bs^2)$ and $b_i \in L^2(\mu)$ given by
\[
\by_i := \bc \bx_i 
\qquad \text{and} \qquad
b_i (\bz) := 
\frac{\phi_\bc(\bx_i)}{\phi_\bc(\bc^{-1} \bz)} \cdot a_i(\bc^{-1} \bz),
\]
see Lemma~\ref{l1}. 

It is easy to see
that $(\bx_i,a_i)\mapsto(\by_i,b_i)$ defines a bijection on the set
$\ell^2(\bs^2)\times L^2(\mu)$ of pairs of nodes and coefficients.
Furthermore, 
\[
\cost(A) = \cost(Q_\bc^{-1} A Q_\bc),
\]
cf.\ Remark~\ref{r7}.
\end{rem}

\begin{cor}\label{c1}
Assume that \eqref{g1a} and \eqref{g2} are satisfied.
For every $n \in \N$ we have
\[
\frac{e_n(L_{\bs})}{e_0(L_\bs)} = \frac{e_n(K_{\bb})}{e_0(K_\bb)} =
e_n(K_\bb),
\]
i.e., the normalized $n$-th minimal errors for $L^2$-approximation on 
Gaussian spaces and on the corresponding Hermite spaces, related by 
\eqref{g2}, coincide.
\end{cor}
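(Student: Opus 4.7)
The plan is to combine Theorem~\ref{t2} with Remark~\ref{r5} to pass from a pointwise relation between worst-case errors to a relation between the $n$-th minimal errors, and then identify the proportionality constant with $e_0(L_\bs)$ via Lemma~\ref{l7}. Concretely, for any linear sampling method $A$ with nodes in $\ell^2(\bs^2)$, Theorem~\ref{t2} gives
\[
e(A,L_\bs) = (\be-\bb)_\ast^{1/2} \cdot e(Q_\bc^{-1} A Q_\bc,K_\bb),
\]
and Remark~\ref{r5} asserts that $A \mapsto Q_\bc^{-1} A Q_\bc$ is a bijection on the class of linear sampling methods, preserving the cost. Taking the infimum over all methods with $\cost(A) \leq n$ on both sides then yields
\[
e_n(L_\bs) = (\be-\bb)_\ast^{1/2} \cdot e_n(K_\bb).
\]

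Next, I would evaluate the constant $(\be-\bb)_\ast^{1/2}$. Using the defining relation \eqref{g2}, I have $1-\beta_j = 2/(1+(1+8\sigma_j^2)^{1/2})$, so the $L^2$-approximation formula in Lemma~\ref{l7} gives
\[
e_0(L_\bs) = \prod_{j \in J} (1-\beta_j)^{1/2} = (\be-\bb)_\ast^{1/2}.
\]
Combined with $e_0(K_\bb) = 1$ from the same lemma, this establishes
\[
\frac{e_n(L_\bs)}{e_0(L_\bs)} = e_n(K_\bb) = \frac{e_n(K_\bb)}{e_0(K_\bb)},
\]
which is exactly the assertion.

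The only point that deserves care is the cost-preservation aspect of the bijection, since in the infinite-variate case one must verify that $Q_\bc^{-1} A Q_\bc$ uses the same number of active coordinates at each sampling point; this is exactly the content of Remark~\ref{r5}, where $\by_i = \bc \bx_i$ and $\bc$ is a positive sequence, so $\Act(\by_i) = \Act(\bx_i)$ and hence $\cost(A) = \cost(Q_\bc^{-1} A Q_\bc)$. With this in hand the passage from worst-case errors to minimal errors is immediate, and no additional approximation argument is required. I expect no real obstacle here: the corollary is essentially a bookkeeping consequence of Theorem~\ref{t2}, Remark~\ref{r5}, and the explicit $e_0$-formulas in Lemma~\ref{l7}.
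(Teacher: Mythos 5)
Your proposal is correct and follows exactly the paper's route: the paper's proof is the one-line instruction to combine Theorem~\ref{t2}, Remark~\ref{r5}, and Lemma~\ref{l7}, and you have simply spelled out those steps, including the correct identification $e_0(L_\bs)=(\be-\bb)_\ast^{1/2}$ from \eqref{g2} and the cost-preservation needed in the case $d=\infty$.
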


\begin{proof}
Combine Theorem~\ref{t2}, Remark~\ref{r5}, and Lemma~\ref{l7}.
\end{proof}

\subsection{Functions of Finitely Many Variables}

$L^2$-approximation in the case of finitely many variables has been 
studied previously by \citet{FHW2012} and \citet{SW2018} for Gaussian 
spaces and by \citet{IKPW16a} and \citet{IKPW16b} for Hermite spaces. 
While the latter paper employs linear sampling methods as described
in Section~\ref{s2a}, \citet{IKPW16a} and \citet{SW2018} allow a larger 
class of
algorithms, namely linear algorithms $A$ based on linear information,
i.e.,
\begin{equation}\label{eq:linear_algorithm}
A(f) := \sum_{i=1}^n \lambda_i(f) \cdot a_i
\end{equation}
with $n\in\N$, bounded linear functionals $\lambda_i$ on $H(M)$
and coefficients $a_i\in L^2(\mu)$; \citet{FHW2012} consider both 
classes of algorithms. In the case of general linear algorithms,
the same tractability concepts have been 
studied on the one hand in \citet{FHW2012} and \citet{SW2018} for
Gaussian spaces, and on the other hand in 
\citet{IKPW16a} for Hermite spaces, with rather complete and
matching results for both types of function space.

It was already observed in \citet{IKPW16b} that the linear sampling
methods studied there achieve the same results as
algorithms of the form \eqref{eq:linear_algorithm} for several 
tractability concepts.
Indeed, as shown in \citet{kriegetat23}, the results from \citet{DKM22} are applicable and lead to rather complete results
for tractability 
based on linear sampling methods, though in a non-constructive way.
Therefore, we do not apply our transference result to this case here.
Our results for infinitely many variables, however,
are new.  

\subsection{Functions of Infinitely Many Variables}
\label{SUBSEC:FwIMV}

As in Section~\ref{SUBSEC:integr_IMV}, we consider $\rho$ given by
\eqref{eq:rho} and assume that the function $\$$ obeys \eqref{g82}.

The next result follows easily from \citet[Cor.~4.10]{GneEtAl24} 
by means of our transference result.

\begin{theo}\label{Thm:InfDimApp}
Assume that \eqref{g1a} is satisfied.
The polynomial decay rate of $n$-th minimal errors of 
$L^2$-ap\-prox\-i\-ma\-tion on the Gaussian space $H(L_\bs)$ is 
\[
\dec(L_\sigma) = \frac{1}{2} (\rho - 1).
\]
\end{theo}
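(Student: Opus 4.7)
The plan is to mirror the proof of Theorem~\ref{Thm:InfDimInt}, substituting the transference result for $L^2$-approximation (Corollary~\ref{c1}) for the one for integration (Corollary~\ref{c2}), and invoking the $L^2$-approximation analog of \citet[Cor.~4.10]{GneEtAl24} on Hermite spaces.

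First I would let $\bb := (\beta_j)_{j \in \N}$ be determined from $\bs$ by the relation \eqref{g2}. Since \eqref{g1a} implies $\sigma_j \to 0$, Remark~\ref{r4} gives $\beta_j \asymp \sigma_j^2$, so in particular \eqref{g1b} holds and $H(K_\bb)$ is well defined. By Corollary~\ref{c1} the normalized $n$-th minimal errors on $H(L_\bs)$ and $H(K_\bb)$ coincide; equivalently $e_n(L_\bs) = e_0(L_\bs) \cdot e_n(K_\bb)$ for every $n \in \N$. Because $e_0(L_\bs)$ is a positive constant independent of $n$, it follows that
\[
\dec(L_\bs) = \dec(K_\bb).
\]
The cost-preservation statement in Remark~\ref{r5} (together with the analog of Remark~\ref{r7} for $L^2$-approximation) ensures that this identity is valid also in the $d = \infty$ setting under the unrestricted subspace sampling model, since the correspondence $\bx_i \mapsto \by_i = \bc\bx_i$ preserves the number of active variables.

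Next I would apply the $L^2$-approximation counterpart of \citet[Cor.~4.10]{GneEtAl24} (analogous to the integration case used in the proof of Theorem~\ref{Thm:InfDimInt}): writing $r_j := -\log_2(\beta_j)$, the weights $\beta_j^{-\nu} = 2^{r_j \nu}$ form exponentially growing Fourier weights in the sense of \citet[Sec.~2.1]{GneEtAl24}, so that
\[
\dec(K_\bb) = \tfrac{1}{2}(\widetilde{\rho} - 1),
\qquad
\widetilde{\rho} := \liminf_{j\to\infty} \frac{\ln(1/\beta_j)}{\ln(j)}.
\]
From $\beta_j \asymp \sigma_j^2$ one immediately gets $\ln(1/\beta_j) = \ln(1/\sigma_j^2) + O(1)$, hence $\widetilde{\rho} = \rho$, and combining with the transference identity completes the proof.

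The only real obstacle I anticipate is bookkeeping around the cost model: one must be sure that the mild growth assumption \eqref{g82} on $\$$, which is needed for the $L^2$-approximation decay result on Hermite spaces from \citet{GneEtAl24} to apply, transfers across the isomorphism. This is essentially automatic from $\Act(\bx_i) = \Act(\bc\bx_i)$ (all $c_j \neq 0$), so no genuinely new estimate is required; the proof is essentially a one-line application of Corollary~\ref{c1} on top of the Hermite-space result.
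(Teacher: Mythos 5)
Your proposal is correct and follows essentially the same route as the paper: apply Corollary~\ref{c1} (with $\bb$ given by \eqref{g2}, so $\beta_j \asymp \sigma_j^2$ by Remark~\ref{r4}) to reduce to $\dec(K_\bb)$, then invoke the $L^2$-approximation case of Cor.~4.10 of \citet{GneEtAl24} for exponentially growing Fourier weights and identify $\widetilde{\rho}=\rho$. The cost-preservation point you raise is exactly what Remark~\ref{r5} supplies, so no gap remains.
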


\begin{proof}
We proceed as in the proof of Theorem~\ref{Thm:InfDimInt}: 
By Corollary~\ref{c1}, we only have to
consider $\dec(K_\bb)$, and $\dec(K_\bb)$ is known for
$L^2$-approximation, too.
This time, however,
the sequence $\bb = (\beta_j)_{j\in\N}$ is defined as in \eqref{g2},
but again $\beta_j \asymp \sigma_j^2$
and the weights $\beta_j^{-\nu} = 2^{r_j \cdot \nu}$ with
$\nu,j\in\N$ are exponentially growing Fourier weights in the sense of 
\citet[Sec.~2.1]{GneEtAl24}, possibly after reordering the kernels.
As before,
\[
\rho = \liminf_{j\to \infty} \frac{\ln(1/\beta_j)}{\ln(j)} = 
\liminf_{j\to \infty} \frac{r_j \cdot \ln(2)}{\ln(j)}.
\] 
Hence we obtain from \citet[Cor.~4.10]{GneEtAl24} that the polynomial 
decay rate of $n$-th minimal errors of $L^2$-approximation on the 
Hermite space $H(K_\bb)$ is 
\[
\dec(K_\bb) = \frac{1}{2} (\rho - 1).
\qedhere
\]
\end{proof}

{\small

\bibliographystyle{abbrvnat}
\bibliography{references}

}

\end{document}